\newtheorem{theorem}[equation]{Theorem}
\newtheorem{proposition}[equation]{Proposition}
\newtheorem{lemma}[equation]{Lemma}
\theoremstyle{definition}
\numberwithin{equation}{section} 
\def\supp{\operatorname{supp}}
\def\BMO{\operatorname{BMO}}
\def\VMO{\operatorname{VMO}}
\def\XMO{\operatorname{XMO}}
\def\CMO{\operatorname{CMO}}
\def\Re{\operatorname{Re}}
\def\inf{\operatornamewithlimits{inf\vphantom{p}}}
\def\loc{\operatorname{loc}}
\def\C{\mathscr{C}}
\def\CC{\mathbb{C}}
\def\N{\mathbb{N}}
\def\R{\mathbb{R}}
\def\V{\mathscr{V}}
\def\Rn{\mathbb{R}^n}
\newcommand\restr[2]{\ensuremath{\left.#1\right|_{#2}}}
\DeclareMathOperator*{\esssup}{ess\,sup}
\begin{document}

\title{The $\mathrm{CMO}$-Dirichlet problem for elliptic systems in the upper half-space}

\author{Mingming Cao}
\address{Mingming Cao\\
Instituto de Ciencias Matem\'aticas CSIC-UAM-UC3M-UCM\\
Con\-se\-jo Superior de Investigaciones Cient{\'\i}ficas\\
C/ Nicol\'as Cabrera, 13-15\\
E-28049 Ma\-drid, Spain} \email{mingming.cao@icmat.es}

\thanks{The author is supported by Spanish Ministry of Science and Innovation through the Juan de la Cierva-Formaci\'{o}n 2018 (FJC2018-038526-I), through the ``Severo Ochoa Programme for Centres of Excellence in R\&D'' (CEX2019-000904-S), and through PID2019-107914GB-I00, and by the Spanish National Research Council through the ``Ayuda extraordinaria a Centros de Excelencia Severo Ochoa'' (20205CEX001)}

\date{June 1, 2022}

\subjclass[2010]{35J25, 35J47, 35J57, 35J67, 42B20, 42B35, 42B37}


\keywords{
Second-order elliptic system, 
$\CMO$ Dirichlet problem, 
Vanishing Carleson measure, 
Poisson kernel, 
Fatou-type theorem, 
Nontangential pointwise trace}

\begin{abstract}
We prove that for any second-order, homogeneous, $N \times N$ elliptic system $L$ with constant complex coefficients in $\mathbb{R}^n$, the Dirichlet problem in $\mathbb{R}^n_+$ with boundary data in $\mathrm{CMO}(\mathbb{R}^{n-1}, \mathbb{C}^N)$ is well-posed under the assumption that $d\mu(x', t) := |\nabla u(x)|^2\, t \, dx' dt$ is a strong vanishing Carleson measure in $\mathbb{R}^n_+$ in some sense.  This solves an open question posed by Martell et al. \cite{M+4}. The proof relies on a quantitative Fatou-type theorem, which not only guarantees the existence of the pointwise nontangential boundary trace for smooth null-solutions satisfying a strong vanishing Carleson measure condition, but also includes a Poisson integral representation formula of solutions along with a characterization of $\mathrm{CMO}(\mathbb{R}^{n-1}, \mathbb{C}^N)$ in terms of the traces of solutions of elliptic systems. Moreover, we are able to establish the well-posedness of the Dirichlet problem in $\mathbb{R}^n_+$ for a system $L$ as above in the case when the boundary data belongs to $\mathrm{XMO}(\mathbb{R}^{n-1}, \mathbb{C}^N)$, which lines in between $\mathrm{CMO}(\mathbb{R}^{n-1}, \mathbb{C}^N)$ and $\mathrm{VMO}(\mathbb{R}^{n-1}, \mathbb{C}^N)$. Analogously, we formulate a new brand of strong Carleson measure conditions and a characterization of $\mathrm{XMO}(\mathbb{R}^{n-1}, \mathbb{C}^N)$ in terms of the traces of solutions of elliptic systems. 
\end{abstract}

\maketitle

\section{Introduction}\label{sec:intro}
This paper is devoted to studying the Dirichlet problem for second-order elliptic systems with complex coefficients in the upper half-space with data in $\CMO$ and $\XMO$ spaces. To be more specific, we introduce some notation to elaborate on the actual setting.  

Fix $n, N \in \N$ with $n \geq 2$ and consider a second-order, homogeneous, constant complex coefficients, $N \times N$ system 
\begin{equation}\label{eq:Lu}
Lu := \big(a_{jk}^{\alpha\beta}\partial_j\partial_k u_\beta \big)_{1 \leq \alpha \leq N}, 
\end{equation} 
when acting on a $\mathscr{C}^2$ vector-valued function $u=(u_\beta)_{1\leq\beta\leq N}$ defined in an open subset of $\Rn$, where $a_{jk}^{\alpha\beta}\in\mathbb{C}$ for every $j,k \in \{1, \dots, n\}$ and $\alpha,\beta\in\{1, \dots, N\}$. Here and elsewhere, we use the convention of summation over repeated indices. We also assume that $L$ is elliptic, in the sense that there exists a constant $\kappa_0>0$ such that the following Legendre-Hadamard condition holds: 
\begin{equation}\label{eq:elliptic} 
\begin{array}{c}
\Re \bigl[a_{jk}^{\alpha\beta}\xi_j\xi_k\overline{\zeta_\alpha}\zeta_\beta \bigr]
\geq\kappa_0 |\xi|^2 |\zeta|^2\,\,\text{ for every}
\\[10pt]
\xi=(\xi_j)_{1\leq j\leq n} \in \Rn
\,\,\text{ and }\,\,\zeta=(\zeta_\alpha)_{1 \leq \alpha \leq N} \in \mathbb{C}^N.
\end{array}
\end{equation} 
In the scalar case (i.e. $N=1$), elliptic operators include the Laplacian $\Delta=\sum_{j=1}^n \partial_j^2$ or, more generally, operators of the form ${\rm div}(A\nabla)$, where $A=(a_{jk})_{1 \leq j, k \leq n}\in \mathbb{C}^{n \times n}$ satisfies the scalar version of \eqref{eq:elliptic}, that is,
\[
\inf_{\xi\in \mathbb{S}^{n-1}}{\rm Re}\,\big[a_{rs}\xi_r\xi_s\bigr]>0,
\]  
where $\mathbb{S}^{n-1}$ stands for the unit sphere in $\Rn$. Regarding the case $N>1$, an example of an elliptic system is the complex version of the Lam\'{e} system of elasticity in $\Rn$, given by 
\begin{equation*}
L:=\mu \Delta + (\lambda + \mu) \nabla{\rm div},
\end{equation*} 
where the constants $\lambda,\mu\in{\mathbb{C}}$ (called Lamé moduli in the literature) satisfy  
\[
\Re\mu>0 \quad\text{ and }\quad \Re(2\mu+\lambda)>0,
\]
which are indeed equivalent to \eqref{eq:elliptic}. While the Lam\'{e} system is symmetric, we stress that the results in this paper require no symmetry for the systems involved. 

We are interested in showing well-posedness for Dirichlet boundary value problems for $L$ in \eqref{eq:Lu}--\eqref{eq:elliptic} in the upper half-space. With this purpose in mind, given $n \geq 2$, we denote the upper half-space in $\Rn$ as 
\begin{equation*}
\R^n_{+} := \{(x', t) \in \Rn: \, x' \in \R^{n-1}, t>0\}.
\end{equation*}
We also identify the boundary $\partial\R^n_{+}$ with $\R^{n-1}$ via $\partial \R^n_{+} \ni (x', 0) \equiv x' \in \R^{n-1}$. The cone with vertex at $x'\in\R^{n-1}$ and aperture $\kappa>0$ is given by
\begin{equation*}
\Gamma_{\kappa}(x'):=\{(y',t) \in \R^n_{+}: |x'-y'|<\kappa t\}.
\end{equation*}
Given a vector-valued function $u: \R^n_{+} \to \mathbb{C}^N$, we define its nontangential boundary trace (whenever it is meaningful) as  
\begin{equation*}
\big(u\big|^{{}^{\kappa-{\rm n.t.}}}_{\partial\R^n_{+}}\big)(x'):=
\lim_{\Gamma_{\kappa}(x') \ni y \to (x',0)} u(y), \quad x' \in \R^{n-1},
\end{equation*}
and the nontangential maximal function of $u$ as
\begin{equation*}
\mathcal{N}_{\kappa}u(x') := \esssup\{|u(y)|:y\in\Gamma_{\kappa}(x')\},
\quad x' \in \R^{n-1}.
\end{equation*}

Let us introduce $\BMO(\R^{n-1}, \mathbb{C}^N)$, the John-Nirenberg space of vector-valued functions of bounded mean oscillations in $\R^{n-1}$, as the collection of $\mathbb{C}^N$-valued functions $f=(f_{\alpha})_{1 \le \alpha \le N}$ with components in $L^1_{\loc}(\R^{n-1})$ satisfying 
\[
\|f\|_{\BMO(\R^{n-1}, \mathbb{C}^N)} 
:= \sup_{Q \subseteq \R^{n-1}} \fint_{Q} |f(x') - f_Q| dx' < \infty,
\]
where $f_Q$ denotes the average value of $f$ on the cube $Q \subset \R^{n-1}$. In order to introduce significant spaces of functions of vanishing mean oscillations, we let $\C_c^{\infty}(\R^{n-1}, \mathbb{C}^N)$ denote the space of all smooth $\mathbb{C}^N$-valued functions in $\R^{n-1}$ with compact support. Define $\CMO(\R^{n-1}, \mathbb{C}^N)$ as the closure of $\C_c^{\infty}(\R^{n-1}, \mathbb{C}^N)$ in $\BMO(\R^{n-1}, \mathbb{C}^N)$. Additionally, the space $\CMO(\R^{n-1}, \mathbb{C}^N)$ is endowed with the norm of $\BMO(\R^{n-1}, \mathbb{C}^N)$. Setting
\begin{align*}
\mathscr{B}^1(\R^{n-1}, \mathbb{C}^N)  
&:= \Big\{f \in \C^1(\R^{n-1}, \mathbb{C}^N) \cap \BMO(\R^{n-1}, \mathbb{C}^N): 
\lim_{|x| \to \infty} |\nabla f(x)|=0 \Big\},
\\ 
\mathscr{B}^{\infty}(\R^{n-1}, \mathbb{C}^N) 
&:=\Big\{f \in \C^{\infty}(\R^{n-1}, \mathbb{C}^N) \cap \BMO(\R^{n-1}, \mathbb{C}^N): 
\lim_{|x| \to \infty} |\partial^{\alpha} f(x)|=0, \, \forall \alpha \in \N^n\Big\}, 
\end{align*}
we define $\mathrm{XMO}(\R^{n-1}, \mathbb{C}^N)$ as the closure of $\mathscr{B}^{\infty}(\R^{n-1}, \mathbb{C}^N)$ in $\BMO(\R^{n-1}, \mathbb{C}^N)$, with the norm of $\BMO(\R^{n-1}, \mathbb{C}^N)$. We mention that the $\XMO$ space in the scalar-valued case was introduced in \cite{TX} to study the compactness of commutators. Moreover, following the proof in \cite{TXYY, TX}, one can prove that 
\begin{align}\label{eq:CVBMO}
\CMO(\R^{n-1}, \mathbb{C}^N) 
\subsetneq \mathrm{XMO}(\R^{n-1}, \mathbb{C}^N) 
\subsetneq \VMO(\R^{n-1}, \mathbb{C}^N) 
\subsetneq \BMO(\R^{n-1}, \mathbb{C}^N), 
\end{align}
and a characterization of $\mathrm{XMO}(\R^{n-1}, \mathbb{C}^N)$: 
\begin{align}
\text{$\mathrm{XMO}(\R^{n-1}, \mathbb{C}^N)$ is the closure of $\mathscr{B}^1(\R^{n-1}, \mathbb{C}^N)$ in $\BMO(\R^{n-1}, \mathbb{C}^N)$}. 
\end{align}

Next, we turn to the Carleson measure conditions. Given a continuously differentiable function $u$ in $\R^n_+$, we set 
\begin{align*}
\|u\|_{\mathcal{C}(\R^n_+)} := \sup_{Q \subset \R^{n-1}} 
\bigg(\frac{1}{|Q|} \iint_{T_Q} |\nabla u(x)|^2\, t \, dx' dt \bigg)^{\frac12}, 
\end{align*}
where the supremum runs over all cubes $Q$ in $\R^{n-1}$ and $T_Q := Q \times (0, \ell(Q))$. Consider also the following quantities: 
\begin{align*}
\beta_1(u) &:= \lim_{r \to 0^+} \sup_{Q \subset \R^{n-1}: \ell(Q) \le r} 
\bigg(\frac{1}{|Q|} \iint_{T_Q} |\nabla u(x)|^2\, t \, dx' dt \bigg)^{\frac12}, 
\\
\beta_2(u) &:= \lim_{r \to \infty} \sup_{Q \subset \R^{n-1}: \ell(Q) \ge r} 
\bigg(\frac{1}{|Q|} \iint_{T_Q} |\nabla u(x)|^2\, t \, dx' dt \bigg)^{\frac12}, 
\\
\beta_3(u) &:= \lim_{r \to \infty} \sup_{Q \subset \R^{n-1} \setminus Q(0,r)} 
\bigg(\frac{1}{|Q|} \iint_{T_Q} |\nabla u(x)|^2\, t \, dx' dt \bigg)^{\frac12},  
\\
\beta'_3(u; Q) &:= \lim_{|x_Q| \to \infty} 
\bigg(\frac{1}{|Q|} \iint_{T_Q} |\nabla u(x)|^2\, t \, dx' dt \bigg)^{\frac12}, \quad\text{ cube } Q \subset \R^{n-1}. 
\end{align*}
We then define three kinds of spaces of functions related to vanishing Carleson measures: 
\begin{align*}
\V_1(\R^n_+) &:= \{u: \|u\|_{\mathcal{C}(\R^n_+)}<\infty, \beta_1(u)=0\}, 
\\
\V_2(\R^n_+) &:= \{u: \|u\|_{\mathcal{C}(\R^n_+)}<\infty, \beta_1(u)=\beta'_3(u; Q)=0 \text{ for each cube } Q \subset \R^{n-1}\}, 
\\ 
\V_3(\R^n_+) &:= \{u: \|u\|_{\mathcal{C}(\R^n_+)}<\infty, \beta_1(u)=\beta_2(u)=\beta_3(u)=0\}. 
\end{align*}

The $L^p$-Dirichlet boundary value problem for $L$ as in \eqref{eq:Lu}--\eqref{eq:elliptic} in the upper half-space was first studied by Martell et al. \cite{MMMM16}, in which the Poisson kernel, an $N \times N$-valued function described in detail in Theorem \ref{thm:Poisson}, plays a pivotal role. Additionally, they also proved the well-posedness of Dirichlet problem with boundary data in Banach function spaces, or Hardy spaces, or Morrey spaces. After that, the same authors \cite{MMMM19} established the well-posedness of the $\BMO$-Dirichlet boundary value problem whenever $d\mu(x', t) := |\nabla u(x)|^2\, t \, dx' dt$ is a Carleson measure in $\mathbb{R}^n_+$, that is, $\|u\|_{\mathcal{C}(\R^n_+)}<\infty$. In a similar way, for the $\VMO$-Dirichlet problem, it requires that $d\mu(x', t) := |\nabla u(x)|^2\, t \, dx' dt$ is a vanishing Carleson measure in $\mathbb{R}^n_+$, which means $u \in \V_1(\R^n_+)$. Beyond that, Martell et al. \cite{MMM} proved well-posedness results for the Dirichlet problem in $\R^n_+$ with boundary data in generalized \"{o}lder and Morrey-Campanato. Recently, by means of Rubio de Francia extrapolation on weighted Banach function and modular spaces, Cao, Mar\'{i}n, and Martell \cite{CMM} showed the well-posedness of the Dirichlet problem for $L$ with the boundary data belonging to a general weighted Banach function spaces or a weighted modular space. As a consequence, one can obtain that the Dirichlet problem for such systems is well-posed for boundary data in Lebesgue spaces, variable exponent Lebesgue spaces, Lorentz spaces, Orlicz spaces, as well as their weighted versions. 

We are now ready to state our first main theorem. It concerns the well-posedness of the $\CMO$-Dirichlet problem in the upper half-space for systems $L$ as in \eqref{eq:Lu}. 
	
\begin{theorem}\label{thm:CMO}
Let $L$ be an $N \times N$ elliptic constant complex coefficient system as in \eqref{eq:Lu}--\eqref{eq:elliptic}. Then the $\CMO$-Dirichlet boundary value problem for $L$ in $\R^n_+$, namely
\begin{equation}\label{eq:CMO}
\begin{cases}
u \in \C^{\infty}(\R^n_+, \mathbb{C}^N), \\
Lu = 0 \text{ in } \R^n_+, \\
u \in \V_3(\R^n_+), \\
\restr{u}{\partial \R^n_+}^{\rm n.t.} = f \in \CMO(\R^{n-1}, \mathbb{C}^N) \text{ a.e. on } \R^{n-1},
\end{cases}
\end{equation} 
is well-posed. Moreover, the unique solution is given by 
\begin{align}\label{eq:CMO-solution}
u(x', t) := P_t^L * f(x'), \quad (x', t) \in \R^n_+, 
\end{align}
where $P^L$ denotes the Poisson kernel for $L$ in $\R^n_+$ from Theorem {\rm \ref{thm:Poisson}}, and satisfies the following  properties: 
\begin{align}
& \sup_{\varepsilon>0} \|u(\cdot, \varepsilon)\|_{\BMO(\R^{n-1}, \mathbb{C}^N)} 
\le C \|u\|_{\mathcal{C}(\R^n_+)},
\\
& f \in \CMO(\Rn, \mathbb{C}^N) \text{ if and only if } u \in \V_3(\R^n_+), 
\\
& C^{-1} \|f\|_{\BMO(\R^{n-1}, \mathbb{C}^N)} \le \|u\|_{\mathcal{C}(\R^n_+)} 
\le C \|f\|_{\BMO(\R^{n-1}, \mathbb{C}^N)}, 
\end{align} 
where the constant $C=C(n, L)$ depends only on the dimension $n$ and the ellipticity constant. 
\end{theorem}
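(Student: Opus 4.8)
The plan is to combine the $\BMO$‑Dirichlet theory of \cite{MMMM19}, the properties of the Poisson kernel recorded in Theorem \ref{thm:Poisson}, a density argument anchored on the identity $\CMO(\R^{n-1},\mathbb{C}^N)=\overline{\C_c^\infty(\R^{n-1},\mathbb{C}^N)}$ (closure in $\BMO$), and the quantitative Fatou‑type theorem that supplies the pointwise nontangential trace together with its Poisson integral representation. Put $u(x',t):=P_t^L*f(x')$. Since $f\in\CMO\subset\BMO$, Theorem \ref{thm:Poisson} gives $u\in\C^\infty(\R^n_+,\mathbb{C}^N)$ with $Lu=0$ in $\R^n_+$, while the $\BMO$‑Dirichlet theory of \cite{MMMM19} yields directly $\|u\|_{\mathcal{C}(\R^n_+)}\le C\|f\|_{\BMO}<\infty$, the bound $\sup_{\varepsilon>0}\|u(\cdot,\varepsilon)\|_{\BMO}\le C\|u\|_{\mathcal{C}(\R^n_+)}$, the two‑sided estimate $C^{-1}\|f\|_{\BMO}\le\|u\|_{\mathcal{C}(\R^n_+)}\le C\|f\|_{\BMO}$, and the fact that $\restr{u}{\partial\R^n_+}^{\rm n.t.}$ exists and equals $f$ a.e.\ on $\R^{n-1}$. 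Thus it remains only to verify $u\in\V_3(\R^n_+)$ and uniqueness.

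\textbf{Membership in $\V_3$.} Each $\beta_i$ satisfies $\beta_i(v)\le\|v\|_{\mathcal{C}(\R^n_+)}$ and the triangle inequality (Minkowski in $L^2(T_Q,t\,dx'\,dt)$, then take suprema and limits), so $\V_3(\R^n_+)$ is a closed subspace of the space of functions with finite Carleson norm, and $f\mapsto P_t^L*f$ is bounded from $\BMO$ into that space; hence $\{f\in\BMO:P_t^L*f\in\V_3\}$ is closed in $\BMO$, and by density of $\C_c^\infty$ in $\CMO$ it suffices to prove $P_t^L*g\in\V_3(\R^n_+)$ for every $g\in\C_c^\infty(\R^{n-1},\mathbb{C}^N)$. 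Fix such $g$ and set $u(x',t):=P_t^L*g(x')$. Using the self‑similarity $P_t^L(z)=t^{-(n-1)}P_1^L(z/t)$, the decay of $P_1^L$ and its derivatives from Theorem \ref{thm:Poisson}, an integration by parts in $x'$ (transferring tangential derivatives from $P^L$ to $g$), and the identity $\int_{\R^{n-1}}\partial_t P_t^L=0$, one obtains: (a) a pointwise bound $|\nabla u(x',t)|\le C_g\big(1+\log_+\tfrac1t\big)$ on $\R^n_+$, whence $\frac1{|Q|}\iint_{T_Q}|\nabla u|^2 t\lesssim C_g^2\,\ell(Q)^2\big(1+\log_+\tfrac1{\ell(Q)}\big)^2\to0$ as $\ell(Q)\to0$, so $\beta_1(u)=0$; (b) a global square‑function bound $\iint_{\R^n_+}|\nabla u|^2\,t\,dx'\,dt\le C\|g\|_{L^2(\R^{n-1})}^2$, immediate from Plancherel in $x'$ and the exponential bound $\|\widehat{P_t^L}(\xi)\|\lesssim e^{-ct|\xi|}$ for some $c=c(L)>0$ (since then $\int_0^\infty t\int|\xi|^2 e^{-2ct|\xi|}|\hat g(\xi)|^2\,d\xi\,dt\lesssim\int|\hat g(\xi)|^2\,d\xi$), which forces $\frac1{|Q|}\iint_{T_Q}|\nabla u|^2 t\le C\|g\|_{L^2}^2\,\ell(Q)^{-(n-1)}\to0$ as $\ell(Q)\to\infty$, so $\beta_2(u)=0$; (c) for $Q$ with $Q\cap Q(0,r)=\emptyset$, split into $\ell(Q)\le r$ — handled by the decay $|\nabla u(x',t)|\lesssim C_g|x'|^{-n}$ valid for $x'$ outside a fixed ball, giving $\lesssim C_g^2\,r^{2-2n}$ — and $\ell(Q)>r$ — handled by (b), giving $\lesssim C\|g\|_{L^2}^2\,r^{-(n-1)}$; since $n\ge2$, both $\to0$ as $r\to\infty$, so $\beta_3(u)=0$. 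Hence $P_t^L*g\in\V_3$, and by the closedness/density argument $u=P_t^L*f\in\V_3(\R^n_+)$. The converse, that $u\in\V_3(\R^n_+)$ forces $f=\restr{u}{\partial\R^n_+}^{\rm n.t.}\in\CMO(\R^{n-1},\mathbb{C}^N)$, is exactly the trace characterization built into the quantitative Fatou‑type theorem.

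\textbf{Uniqueness and the main obstacle.} If $u_1,u_2$ both solve \eqref{eq:CMO}, then $w:=u_1-u_2$ is a smooth null‑solution of $L$ in $\V_3(\R^n_+)$ with vanishing nontangential trace; by the quantitative Fatou‑type theorem — existence of the pointwise nontangential trace together with the representation $w=P_t^L*\big(\restr{w}{\partial\R^n_+}^{\rm n.t.}\big)$ — we get $w=P_t^L*0=0$, so the solution is unique and equals \eqref{eq:CMO-solution}; together with the estimates above, this proves the theorem. The genuinely heavy input is the quantitative Fatou‑type theorem itself, which simultaneously delivers the representation formula (hence uniqueness) and the $\V_3\Rightarrow\CMO$ half of the characterization, and its proof is where the real analysis lies. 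Within the present deduction the one non‑routine point is the verification that $\beta_i(P_t^L*g)=0$ for test functions $g$; among the three, the far‑field condition $\beta_3=0$ is the most delicate, since it must reconcile the pointwise decay of $P^L$ for large $|x'|$ with the global square‑function bound for large cubes — precisely the interplay, absent in the plain $\BMO$ and $\VMO$ settings, that singles out the class $\V_3$ associated with $\CMO$.
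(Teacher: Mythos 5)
Your proof is correct, and the reduction to the quantitative Fatou-type theorem for uniqueness and the $\V_3\Rightarrow\CMO$ direction is exactly what the paper does (Theorem \ref{thm:Fatou-CMO}). However, your treatment of the existence direction, namely $f\in\CMO\Rightarrow P^L_t*f\in\V_3(\R^n_+)$, follows a genuinely different route from the paper's Proposition \ref{pro:existence}. The paper works directly with a general $f\in\CMO$: it establishes the Littlewood--Paley-type estimate
\[
\Bigl(\frac{1}{|Q|}\iint_{T_Q}|\nabla u|^2\,t\,dx'\,dt\Bigr)^{1/2}
\lesssim\sum_{k\ge0}2^{-k}\Bigl(\fint_{2^kQ}|f-f_{2^kQ}|^2\,dx'\Bigr)^{1/2}
\]
via the kernel $\theta^j(x',t;y')=t\,\partial_j K^L(x'-y',t)$ and Lemma \ref{lem:SIO}, and then converts the three vanishing-oscillation conditions $\gamma_1(f)=\gamma_2(f)=\gamma_3(f)=0$ from Proposition \ref{pro:CMO} termwise (after truncating the dyadic series using $\sup_k a_k(Q)\lesssim\|f\|_{\BMO}$) into the three vanishing-Carleson conditions $\beta_1(u)=\beta_2(u)=\beta_3(u)=0$. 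You instead observe that $\V_3$ is closed under the $\mathcal{C}$-seminorm (each $\beta_i$ is dominated by $\|\cdot\|_{\mathcal{C}}$ and satisfies the triangle inequality) and that $f\mapsto P^L_t*f$ is $\BMO$-to-$\mathcal{C}$ bounded, reduce by density to $g\in\C^\infty_c$, and verify $\beta_1=\beta_2=\beta_3=0$ for $P^L_t*g$ by hand via the pointwise $\log$-bound near $t=0$, the far-field decay $|\nabla u|\lesssim C_g|x'|^{-n}$ away from $\supp g$, and the global square-function bound $\iint_{\R^n_+}|\nabla u|^2\,t\,dx'\,dt\lesssim\|g\|^2_{L^2}$ (the exponential decay of $\widehat{P^L_t}$ used here is standard, and can alternatively be deduced from Lemma \ref{lem:SIO}\eqref{list-1}). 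Both approaches are valid for $\CMO$. The paper's has the advantage that the same dyadic estimate handles $\CMO$ and $\XMO$ simultaneously by swapping Propositions \ref{pro:CMO} and \ref{pro:XMO}, whereas your density argument would require replacing $\C^\infty_c$ by a suitable dense class (such as $\mathscr{B}^\infty$) in the $\XMO$ case, where the explicit pointwise estimates are less convenient because test functions are no longer compactly supported. Your approach, on the other hand, isolates cleanly the closedness of $\V_3$ as the structural fact powering the argument.
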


Our second main theorem considers the well-posedness of the $\XMO$-Dirichlet problem in the upper half-space for systems $L$ as in \eqref{eq:Lu}. 

\begin{theorem}\label{thm:XMO}
Let $L$ be an $N \times N$ elliptic constant complex coefficient system as in \eqref{eq:Lu}--\eqref{eq:elliptic}. Then the $\mathrm{XMO}$-Dirichlet boundary value problem for $L$ in $\R^n_+$, namely
\begin{equation}\label{eq:XMO}
\begin{cases}
u \in \C^{\infty}(\R^n_+, \mathbb{C}^N), \\
Lu = 0 \text{ in } \R^n_+, \\
u \in \V_2(\R^n_+), \\
\restr{u}{\partial \R^n_+}^{\rm n.t.} = f \in \mathrm{XMO}(\R^{n-1}, \mathbb{C}^N) \text{ a.e. on } \R^{n-1},
\end{cases}
\end{equation} 
is well-posed. Moreover, the unique solution is given by 
\begin{align}\label{eq:XMO-solution}
u(x', t) := P_t^L * f(x'), \quad (x', t) \in \R^n_+, 
\end{align}
where $P^L$ denotes the Poisson kernel for $L$ in $\R^n_+$ from Theorem {\rm \ref{thm:Poisson}}, and satisfies the following  properties: 
\begin{align}
& \sup_{\varepsilon>0} \|u(\cdot, \varepsilon)\|_{\BMO(\R^{n-1}, \mathbb{C}^N)} 
\le C \|u\|_{\mathcal{C}(\R^n_+)},
\\
& f \in \XMO(\Rn, \mathbb{C}^N) \text{ if and only if } u \in \V_2(\R^n_+), 
\\
& C^{-1} \|f\|_{\BMO(\R^{n-1}, \mathbb{C}^N)} \le \|u\|_{\mathcal{C}(\R^n_+)} 
\le C \|f\|_{\BMO(\R^{n-1}, \mathbb{C}^N)}, 
\end{align} 
where the constant $C=C(n, L)$ depends only on the dimension $n$ and the ellipticity constant. 
\end{theorem}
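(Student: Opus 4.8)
The plan is to run the same argument as for Theorem~\ref{thm:CMO}, with $\V_3(\R^n_+)$ replaced by $\V_2(\R^n_+)$, with $\CMO(\R^{n-1},\mathbb{C}^N)$ replaced by $\XMO(\R^{n-1},\mathbb{C}^N)$, and with the density of $\C_c^{\infty}(\R^{n-1},\mathbb{C}^N)$ in $\CMO(\R^{n-1},\mathbb{C}^N)$ replaced by the density of $\mathscr{B}^{\infty}(\R^{n-1},\mathbb{C}^N)$ — equivalently, of $\mathscr{B}^{1}(\R^{n-1},\mathbb{C}^N)$, by the identification recorded after \eqref{eq:CVBMO} — in $\XMO(\R^{n-1},\mathbb{C}^N)$. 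All the features that only involve the ambient $\BMO$ structure are inherited from the $\BMO$-Dirichlet theory of \cite{MMMM19}: since $\XMO(\R^{n-1},\mathbb{C}^N)\subset\BMO(\R^{n-1},\mathbb{C}^N)$, for $f\in\XMO(\R^{n-1},\mathbb{C}^N)$ the function $u:=P^L_t*f$ lies in $\C^{\infty}(\R^n_+,\mathbb{C}^N)$ with $Lu=0$ in $\R^n_+$ by Theorem~\ref{thm:Poisson}, and it satisfies $\restr{u}{\partial\R^n_+}^{\rm n.t.}=f$ a.e. on $\R^{n-1}$, $\sup_{\varepsilon>0}\|u(\cdot,\varepsilon)\|_{\BMO(\R^{n-1},\mathbb{C}^N)}\le C\|u\|_{\mathcal{C}(\R^n_+)}$, and $C^{-1}\|f\|_{\BMO(\R^{n-1},\mathbb{C}^N)}\le\|u\|_{\mathcal{C}(\R^n_+)}\le C\|f\|_{\BMO(\R^{n-1},\mathbb{C}^N)}$; in particular $\|u\|_{\mathcal{C}(\R^n_+)}<\infty$. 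The same theory also supplies the Poisson representation $v=P^L_t*(\restr{v}{\partial\R^n_+}^{\rm n.t.})$ for any smooth null-solution $v$ in $\R^n_+$ with $\|v\|_{\mathcal{C}(\R^n_+)}<\infty$; hence uniqueness for \eqref{eq:XMO} is immediate, because a solution $v$ of \eqref{eq:XMO} with datum $f$ is such a null-solution with $\restr{v}{\partial\R^n_+}^{\rm n.t.}=f$ a.e., so $v=P^L_t*f$, which is \eqref{eq:XMO-solution}.

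The substantive point is thus the equivalence $f\in\XMO(\R^{n-1},\mathbb{C}^N)\Leftrightarrow P^L_t*f\in\V_2(\R^n_+)$, whose forward implication also delivers existence (the $u$ built above then lies in $\V_2(\R^n_+)$ and satisfies all of \eqref{eq:XMO}). For ``$\Rightarrow$'' I would first check the two vanishing conditions directly on the dense class $\mathscr{B}^{1}(\R^{n-1},\mathbb{C}^N)$. If $g\in\mathscr{B}^{1}(\R^{n-1},\mathbb{C}^N)$, then $\nabla g$ is continuous, bounded, and vanishes at infinity; using that $\nabla(P^L_t*g)$ is controlled pointwise by $\|\nabla g\|_{L^\infty}$ and behaves like $\nabla g$ as $t\to0^+$, a direct computation gives $\beta_1(P^L_t*g)=0$, while for a fixed cube $Q$ one writes the gradient of $P^L_t*g$ through $\nabla g$ and splits the convolution into a fixed neighborhood of the origin — killed by the decay of the Poisson kernel once the center $x_Q$ recedes — and the complementary region, where $\nabla g$ is small, obtaining $\beta'_3(P^L_t*g;Q)=0$. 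For general $f\in\XMO(\R^{n-1},\mathbb{C}^N)$, choose $g\in\mathscr{B}^{1}(\R^{n-1},\mathbb{C}^N)$ with $\|f-g\|_{\BMO(\R^{n-1},\mathbb{C}^N)}$ arbitrarily small; since $\beta_1$ and $\beta'_3(\,\cdot\,;Q)$ are subadditive (by the triangle inequality in $L^2(T_Q,t\,dx'dt)$ followed by the appropriate supremum and limit) and are each dominated by $\|\cdot\|_{\mathcal{C}(\R^n_+)}$, one gets
\[
\beta_1(P^L_t*f)\le\beta_1(P^L_t*g)+C\|P^L_t*(f-g)\|_{\mathcal{C}(\R^n_+)}\le C\|f-g\|_{\BMO(\R^{n-1},\mathbb{C}^N)},
\]
and likewise $\beta'_3(P^L_t*f;Q)\le C\|f-g\|_{\BMO(\R^{n-1},\mathbb{C}^N)}$; letting $g\to f$ forces both quantities to $0$, i.e. $u\in\V_2(\R^n_+)$.

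For ``$\Leftarrow$'' — needed only for the stated equivalence, not for well-posedness — I would combine the $\VMO$-level analysis of \cite{MMMM19}, by which $\beta_1(u)=0$ already forces the trace $f=\restr{u}{\partial\R^n_+}^{\rm n.t.}$ to have mean oscillation vanishing at small scales, i.e. $f\in\VMO(\R^{n-1},\mathbb{C}^N)$, with the extra information in $\beta'_3(u;Q)=0$ for every cube $Q$: testing the localized reverse bound $\fint_Q|f-f_Q|\le C\big(\tfrac1{|Q|}\iint_{T_{\lambda Q}}|\nabla u|^2\,t\,dx'dt\big)^{1/2}$ against cubes $Q$ of fixed shape and size receding to infinity shows that the mean oscillation of $f$ also vanishes at spatial infinity on each fixed scale; together these two properties place $f$ in the $\BMO$-closure of $\mathscr{B}^{1}(\R^{n-1},\mathbb{C}^N)$, namely in $\XMO(\R^{n-1},\mathbb{C}^N)$.

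I expect the main obstacle to be precisely the handling of the ``$x_Q\to\infty$'' localization on both sides — the direct proof that $\beta'_3(P^L_t*g;Q)=0$ for $g\in\mathscr{B}^{1}(\R^{n-1},\mathbb{C}^N)$, and the upgrade from $f\in\BMO$ to $f\in\XMO$ out of $\beta'_3(u;Q)=0$ — since this is where the $\XMO$ theory genuinely departs from the $\CMO$ theory: in the $\CMO$ setting the stronger, uniform-in-scale conditions $\beta_2(u)=\beta_3(u)=0$ built into $\V_3(\R^n_+)$ are available, whereas here only the weaker, scale-by-scale decay at infinity encoded in $\beta'_3(\,\cdot\,;Q)$ and matched to the definition of $\mathscr{B}^{1}(\R^{n-1},\mathbb{C}^N)$ can be used.
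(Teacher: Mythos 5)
Your overall architecture is sound and is aligned with the paper's — uniqueness from the $\BMO$-level Poisson representation of \cite{MMMM19}, existence and the equivalence $f \in \XMO \Leftrightarrow u \in \V_2$ as the crux — but both of your proposed routes for the equivalence depart from the paper's, and both have concrete unverified steps.

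For the forward direction $f \in \XMO \Rightarrow u \in \V_2$, the paper does not use any density argument at all. Instead, Lemma~\ref{lem:SIO} yields the quantitative estimate \eqref{eq:ufQQ},
\[
\bigg(\frac{1}{|Q|} \iint_{T_Q} |\nabla u|^2\, t\, dx'dt\bigg)^{1/2} \lesssim \sum_{k\ge 0} 2^{-k}\bigg(\fint_{2^kQ}|f - f_{2^kQ}|^2\,dx'\bigg)^{1/2},
\]
valid for all $f \in \BMO(\R^{n-1},\mathbb{C}^N)$, and then the characterization of Proposition~\ref{pro:XMO} ($\gamma_1(f)=\gamma'_3(f;Q)=0$) is fed directly into the right-hand side, with the geometric weights $2^{-k}$ absorbing the sum. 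Your alternative — verifying $\beta_1 = \beta'_3(\cdot;Q)=0$ on $\mathscr{B}^1$ and then approximating — would work once that verification is carried out, but the ``direct computation'' you invoke for $g \in \mathscr{B}^1$ is not as immediate as your sketch suggests: $\partial_t (P^L_t * g)$ cannot be controlled simply by $\|\nabla g\|_{L^\infty}$ (the relevant integral $\int |z'|/|(z',t)|^n\,dz'$ diverges), and one must exploit the cancellation of $\partial_t K^L$ and split near/far contributions carefully, which is essentially a special case of the estimate the paper already proves in Lemma~\ref{lem:SIO}. So your route does more work and you would be re-deriving a weaker form of \eqref{eq:ufQQ} only for smooth data; the paper's route is shorter and applies to general $\BMO$ data at once.

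For the backward direction, your ``localized reverse bound'' $\fint_Q|f-f_Q| \lesssim (\tfrac1{|Q|}\iint_{T_{\lambda Q}}|\nabla u|^2\,t\,dx'dt)^{1/2}$ with a single fixed dilate $\lambda Q$ is not what the duality argument produces, and I do not believe it holds in that form. The correct estimate — the paper's Lemma~\ref{lem:ff-Car} — again has a geometrically decaying sum over all dilates $2^kQ$:
\[
\bigg(\fint_Q |f - f_Q|^2\,dx'\bigg)^{1/2} \lesssim \sum_{k\ge 1} 2^{-k}\bigg(\frac{1}{|2^kQ|}\iint_{T_{2^kQ}}|\nabla u|^2\,t\,dx'dt\bigg)^{1/2}.
\]
This matters: the duality pairing picks up far-field contributions from the Calder\'on reproducing formula, and those contributions can only be packaged into a single cube at the cost of losing the uniform constant. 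The sum form still serves your purpose — as $|x_Q|\to\infty$ each $2^kQ$ also recedes, and the tail of the $k$-sum is controlled by $2^{-k_0}\|u\|_{\mathcal{C}(\R^n_+)}$ — so your conclusion is recoverable, but as stated the inequality is wrong. Separately, you apply the reverse bound directly to $f$, bypassing the paper's two-layer approximation via the vertical shifts $u_\varepsilon$ and their traces $f_\varepsilon$ (Lemma~\ref{lem:u-ue}). This shortcut is defensible since the $\BMO$ Fatou theorem of \cite{MMMM19} already provides $u = P^L_t * f$ and hence the identity $\Phi_t*f = t\,\partial_t u$ that Lemma~\ref{lem:ff-Car} needs, but you should say so explicitly, as this identity is the hinge on which the argument turns.

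In short: the skeleton is right, but you need (i) the geometric-sum version of the reverse oscillation bound in place of a single-cube bound, and (ii) either to carry out the (nontrivial) $\mathscr{B}^1$ computation honestly or, better, to adopt the paper's estimate \eqref{eq:ufQQ} plus the characterization of Proposition~\ref{pro:XMO}, which makes the density argument superfluous.
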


The proof of Theorem \ref{thm:CMO} relies on a quantitative Fatou-type theorem below, which states the existence of the pointwise nontangential boundary trace for any smooth null-solution $u$ satisfying a Carleson measure condition, that is, $u \in \V_3(\R^n_+)$. It also gives a Poisson integral representation formula of the solution $u$. 

\begin{theorem}\label{thm:Fatou-CMO} 
Let $L$ be an $N \times N$ elliptic system with constant complex coefficients as in \eqref{eq:Lu}--\eqref{eq:elliptic} and let $P^L$ be the Poisson kernel for $L$ in $\R^n_+$ from Theorem {\rm \ref{thm:Poisson}}.  Then there exists a constant $C=C(n, L) \in (1, \infty)$ such that 
\begin{equation}\label{eq:u-CMO}
\left.
\begin{aligned}
& u \in \C^{\infty}(\R^n_+, \mathbb{C}^N), \\
& Lu=0 \text{ in } \R^n_+, \\ 
& u \in \V_3(\R^n_+),
\end{aligned}
\right\}
\, \, \Longrightarrow \, \, 
\left\{
\begin{aligned}
& \restr{u}{\partial \R^n_+}^{\rm n.t.} \text{ exists a.e. in } \R^{n-1}, \text{ lies in } \CMO(\R^{n-1}, \mathbb{C}^N), \\
& u(x', t) = (P_t^L*(\restr{u}{\partial \R^n_+}^{\rm n.t.}))(x') \quad\text{for all } (x', t) \in \R^n_+, \\
& C^{-1} \|u\|_{\mathcal{C}(\R^n_+)} 
\le \|\restr{u}{\partial \R^n_+}^{\rm n.t.}\|_{\BMO(\R^{n-1}, \mathbb{C}^N)} 
\le C \|u\|_{\mathcal{C}(\R^n_+)}. 
\end{aligned}
\right.
\end{equation}
Furthermore, the following characterization of $\CMO(\R^{n-1}, \mathbb{C}^N)$ holds:  
\begin{align}\label{eq:CMO-V3}
\CMO(\R^{n-1}, \mathbb{C}^N) 
= \big\{\restr{u}{\partial \R^n_+}^{\rm n.t.}: u \in {\rm LMO}(\R^n_+) \cap \V_3(\R^n_+) \big\}, 
\end{align}
where 
\begin{align}\label{eq:LMO}
{\rm LMO}(\R^n_+) :=\{u \in \C^{\infty}(\R^n_+, \mathbb{C}^N): Lu=0 \text{ in } \R^n_+, \, 
\|u\|_{\mathcal{C}(\R^n_+)} < \infty\}. 
\end{align}
\end{theorem}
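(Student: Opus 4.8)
The plan is to build the Fatou-type statement in two stages, first recovering the nontangential trace and the Poisson representation from the BMO theory of \cite{MMMM19}, and then upgrading to the vanishing/CMO conclusion using the quantitative equivalence $\beta_1(u)=\beta_2(u)=\beta_3(u)=0 \iff f\in\CMO$. First I would observe that $u\in\V_3(\R^n_+)$ in particular gives $\|u\|_{\mathcal{C}(\R^n_+)}<\infty$, so $u$ is a smooth null-solution whose gradient square-density is a Carleson measure; by the $\BMO$-Dirichlet theory one already knows that $\restr{u}{\partial\R^n_+}^{\rm n.t.}$ exists a.e., belongs to $\BMO(\R^{n-1},\mathbb{C}^N)$ with $\|f\|_{\BMO}\approx\|u\|_{\mathcal{C}(\R^n_+)}$, and that $u(x',t)=(P^L_t*f)(x')$. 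This disposes of the Poisson representation formula and the two-sided norm estimate in \eqref{eq:u-CMO} outright; the genuinely new content is that $f$ lies in the smaller space $\CMO$, and the characterization \eqref{eq:CMO-V3}.

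For the membership $f\in\CMO$, I would exploit the representation $u=P^L_t*f$ together with the sharp pointwise bounds on $P^L$ and its derivatives from Theorem \ref{thm:Poisson} to convert each of the three Carleson functionals $\beta_1,\beta_2,\beta_3$ of $u$ into oscillation functionals of $f$ over cubes: $\beta_1(u)=0$ should correspond to $\lim_{r\to0^+}\sup_{\ell(Q)\le r}\fint_Q|f-f_Q|=0$, $\beta_2(u)=0$ to the analogous statement with $\ell(Q)\ge r$ and $r\to\infty$, and $\beta_3(u)=0$ to the vanishing of mean oscillation over cubes escaping to infinity. The mechanism here is the classical Carleson-measure/BMO duality in the quantitative form: for the local scale one localizes $f$ and uses that the Carleson norm of $P^L_t*(f-f_Q)$ over a Carleson box $T_Q$ is controlled by (and controls) $\fint_Q|f-f_Q|$ up to tails that are handled by the decay of $\nabla P^L$. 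These three vanishing conditions together are exactly the known characterization (following \cite{TXYY,TX} and the $\VMO$ theory) of when a $\BMO$ function can be approximated in $\BMO$ norm by $\C_c^\infty$ functions — i.e. lies in $\CMO$ — so concluding $f\in\CMO$ amounts to assembling these equivalences. Conversely, every one of these implications is reversible, so for $f\in\CMO$ one gets $u=P^L_t*f\in\V_3(\R^n_+)$, which together with $\|u\|_{\mathcal{C}(\R^n_+)}<\infty$ (automatic since $f\in\BMO$) and $Lu=0$ gives the inclusion $\supseteq$ in \eqref{eq:CMO-V3}; the inclusion $\subseteq$ is precisely the implication just proved, since $u\in{\rm LMO}(\R^n_+)\cap\V_3(\R^n_+)$ forces $f=\restr{u}{\partial\R^n_+}^{\rm n.t.}\in\CMO$.

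I expect the main obstacle to be the quantitative passage between the three Carleson functionals $\beta_i(u)$ and the corresponding oscillation functionals of $f$, carried out uniformly in the cube and with all three regimes (small cubes, large cubes, distant cubes) handled by a single family of kernel estimates. The delicate point is that $\nabla u = (\nabla P^L_t)*f$ is not simply $(\nabla P^L_t)*(f-c)$ for a single constant $c$ adapted to all of $T_Q$: one must split $f=(f-f_{Q^*})\mathbf{1}_{Q^*}+(f-f_{Q^*})\mathbf{1}_{(Q^*)^c}+f_{Q^*}$ on a dilate $Q^*$ of $Q$, note that the constant contributes nothing to $\nabla u$, bound the local term by Carleson/$L^2$ estimates, and control the far term by summing the $\BMO$-tail series $\sum_j 2^{-j}\fint_{2^jQ^*}|f-f_{2^jQ^*}|$ against the integrable decay of $t|\nabla P^L_t|$. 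Making each tail genuinely small in the appropriate limit — rather than merely bounded — is where the $\V_3$ hypothesis on $u$ has to be fed back in via the reverse estimates, and keeping the bookkeeping of the dilation parameter uniform across $\beta_1,\beta_2,\beta_3$ is the technically heaviest part of the argument.
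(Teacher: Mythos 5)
Your overall architecture matches the paper's: invoke the $\BMO$-Fatou theorem of \cite{MMMM19} to obtain the a.e.\ nontangential trace $f$, its $\BMO$ membership with two-sided norm comparison, and the Poisson representation; then reduce $f\in\CMO$ to the three vanishing oscillation conditions of Proposition~\ref{pro:CMO} and match them against the three vanishing Carleson conditions defining $\V_3(\R^n_+)$; finally get \eqref{eq:CMO-V3} by combining this implication with Proposition~\ref{pro:existence}. The identification of the ``reverse'' Carleson-to-oscillation estimate as the heart of the matter is also correct.

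However, there is a genuine gap in the mechanism you describe for that reverse estimate. The decomposition
$f=(f-f_{Q^*})\mathbf{1}_{Q^*}+(f-f_{Q^*})\mathbf{1}_{(Q^*)^c}+f_{Q^*}$ with tail sums $\sum_j 2^{-j}\fint_{2^jQ^*}|f-f_{2^jQ^*}|$ is precisely the argument that yields the \emph{forward} bound (Carleson functional of $u=P^L_t*f$ over $T_Q$ dominated by dyadic oscillation sums of $f$ --- this is Lemma~\ref{lem:SIO} and the proof of Proposition~\ref{pro:existence}). It does not produce the inequality you actually need, namely
\[
\Big(\fint_Q |f-f_Q|^2\,dx'\Big)^{1/2}\lesssim\sum_{k\ge1}2^{-k}\Big(\frac{1}{|2^kQ|}\iint_{T_{2^kQ}}|\nabla u|^2\,t\,dx'\,dt\Big)^{1/2}.
\]
That direction is the paper's Lemma~\ref{lem:ff-Car}, and it is obtained by a different device: dualize $f-f_Q$ against a mean-zero $h\in L^2(Q)$, invoke the Calder\'on-type reproducing identity $\int\langle f,h\rangle=\lim_{\varepsilon\to0^+}\iint\langle\Phi_t*f,\widetilde\Phi_t*h\rangle\,dx'\,dt/t$ with $\Phi=\partial_nK^L(\cdot,1)$ (so that $\Phi_t*f=t\,\partial_t u$), and then split the $(x',t)$-integral into $T_{2Q}$ (handled by Cauchy--Schwarz and the $L^2(\R^{n-1})\to L^2(\R^n_+,dx'dt/t)$ boundedness of $h\mapsto\widetilde\Phi_t*h$) and annular regions $T_{2^{k+1}Q}\setminus T_{2^kQ}$ (handled by the pointwise bound $|\widetilde\Phi_t*h(x')|\lesssim t\ell(Q)(2^k\ell(Q))^{-n-1}\|h\|_{L^1(Q)}$ coming from the mean-zero property of $h$). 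Without this duality step, the claim ``controlled by (and controls)'' is unsupported.

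Two further points worth flagging. First, the paper does not apply the reverse estimate directly to the trace $f$; it applies it to the vertical shifts $f_\varepsilon(x')=u(x',\varepsilon)$, proves $f_\varepsilon\in\CMO$ for each $\varepsilon>0$ (Lemma~\ref{lem:u-ue}\eqref{list:u-ue-7}, using $u_\varepsilon=P^L_t*f_\varepsilon$ and $u_\varepsilon\in\V_3$), and concludes $f\in\CMO$ from $\|f_\varepsilon-f\|_{\BMO}\to0$ and closedness of $\CMO$ in $\BMO$; this approximation step sidesteps regularity questions and does not appear in your sketch, although a direct application of the reverse estimate to $f$ would likely also succeed. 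Second, the correspondence you write between $\beta_j(u)$ and $\gamma_j(f)$ is not entry-by-entry: since the reverse estimate involves Carleson functionals over the dilates $2^kQ$, when $Q\subset\R^{n-1}\setminus Q(0,r)$ and $k$ grows, $2^kQ$ may no longer avoid the origin, so one must use $\beta_2(u)=\beta_3(u)=0$ jointly to obtain $\gamma_3(f)=0$ (this is the content of the case analysis leading to \eqref{eq:bb-23} in the paper). That bookkeeping is exactly the part you flagged as ``technically heaviest,'' and it does work --- but only once the correct reverse lemma is in hand.
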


Analogously, to show the uniqueness of Theorem \ref{thm:XMO}, we present a Fatou-type theorem as well. 

\begin{theorem}\label{thm:Fatou-XMO} 
Let $L$ be an $N \times N$ elliptic system with constant complex coefficients as in \eqref{eq:Lu}--\eqref{eq:elliptic} and let $P^L$ be the Poisson kernel for $L$ in $\R^n_+$ from Theorem {\rm \ref{thm:Poisson}}.  Then there exists a constant $C=C(n, L) \in (1, \infty)$ such that 
\begin{equation}\label{eq:u-XMO}
\left.
\begin{aligned}
& u \in \C^{\infty}(\R^n_+, \mathbb{C}^N), \\
& Lu=0 \text{ in } \R^n_+, \\ 
& u \in \V_2(\R^n_+),
\end{aligned}
\right\}
\, \, \Longrightarrow \, \, 
\left\{
\begin{aligned}
& \restr{u}{\partial \R^n_+}^{\rm n.t.} \text{ exists a.e. in } \R^{n-1}, \text{ lies in } \XMO(\R^{n-1}, \mathbb{C}^N), \\
& u(x', t) = (P_t^L*(\restr{u}{\partial \R^n_+}^{\rm n.t.}))(x') \quad\text{for all } (x', t) \in \R^n_+, \\
& C^{-1} \|u\|_{\mathcal{C}(\R^n_+)} 
\le \|\restr{u}{\partial \R^n_+}^{\rm n.t.}\|_{\BMO(\R^{n-1}, \mathbb{C}^N)} 
\le C \|u\|_{\mathcal{C}(\R^n_+)}. 
\end{aligned}
\right.
\end{equation}
Furthermore, the following characterization of $\XMO(\R^{n-1}, \mathbb{C}^N)$ holds:  
\begin{align}\label{eq:XMO-V2}
\XMO(\R^{n-1}, \mathbb{C}^N) 
= \big\{\restr{u}{\partial \R^n_+}^{\rm n.t.}: u \in {\rm LMO}(\R^n_+) \cap \V_2(\R^n_+) \big\}, 
\end{align}
where ${\rm LMO}(\R^n_+)$ is given in \eqref{eq:LMO}. 
\end{theorem}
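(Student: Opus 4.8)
\textbf{Proof strategy for Theorem \ref{thm:Fatou-XMO}.}
The plan is to mirror the proof of Theorem \ref{thm:Fatou-CMO} and reduce everything to the $\VMO$-type machinery already available from \cite{MMMM19}, then sharpen it using the XMO-characterization $\XMO=\overline{\mathscr{B}^1}^{\BMO}$. Since $\V_2(\R^n_+)\subset\V_1(\R^n_+)$ (as $\beta_1(u)=0$ is imposed in both), any $u$ as in the left-hand side of \eqref{eq:u-XMO} already satisfies the hypotheses of the known $\VMO$-Fatou theorem, so the nontangential trace $f:=\restr{u}{\partial\R^n_+}^{\rm n.t.}$ exists a.e.\ in $\R^{n-1}$, lies in $\VMO(\R^{n-1},\mathbb{C}^N)$, the representation $u=P_t^L*f$ holds throughout $\R^n_+$, and the two-sided estimate $C^{-1}\|u\|_{\mathcal{C}(\R^n_+)}\le\|f\|_{\BMO}\le C\|u\|_{\mathcal{C}(\R^n_+)}$ is in force. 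The only genuinely new content is the upgrade $f\in\XMO$, together with the converse characterization \eqref{eq:XMO-V2}.

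First I would prove the implication $u\in\V_2(\R^n_+)\Rightarrow f\in\XMO$. The natural route is to show that the extra vanishing conditions encoded in $\V_2$ — namely $\beta'_3(u;Q)=0$ for every cube $Q$, i.e.\ the Carleson box averages over translates $Q+x_Q$ tend to zero as $|x_Q|\to\infty$ — translate, via the two-sided bound applied to suitable truncations/localizations of $u$, into the corresponding vanishing-oscillation-at-infinity condition that characterizes membership in $\overline{\mathscr{B}^1}^{\BMO}=\XMO$. Concretely, one approximates $f$ in $\BMO$ by mollifications $f*\phi_\varepsilon$ (which are $\C^\infty$) and uses $\beta_1(u)=0$ to control the small-scale oscillation and $\beta'_3(u;Q)=0$ to force $\lim_{|x|\to\infty}|\nabla(f*\phi_\varepsilon)(x)|=0$; this exhibits $f$ as a $\BMO$-limit of elements of $\mathscr{B}^\infty(\R^{n-1},\mathbb{C}^N)$, hence $f\in\XMO$. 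Here the bridge between the Carleson functional of $u=P_t^L*f$ and the mean oscillation of $f$ on a cube $Q$ is the standard equivalence (already used for the $\BMO$/$\VMO$ cases) between $\frac{1}{|Q|}\iint_{T_Q}|\nabla u|^2 t\,dx'dt$ and $\fint_Q|f-f_Q|^2$ up to dimensional/ellipticity constants, which must be invoked quantitatively and uniformly in the location of $Q$.

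Next I would prove the reverse inclusion in \eqref{eq:XMO-V2}: given $f\in\XMO$, set $u(x',t):=P_t^L*f(x')$; then $u\in\C^\infty(\R^n_+,\mathbb{C}^N)$, $Lu=0$, $\|u\|_{\mathcal{C}(\R^n_+)}\lesssim\|f\|_{\BMO}<\infty$ so $u\in{\rm LMO}(\R^n_+)$, and $\restr{u}{\partial\R^n_+}^{\rm n.t.}=f$ by the Poisson-kernel Fatou property from Theorem \ref{thm:Poisson} (together with its approximation-to-the-identity behavior). It remains to check $u\in\V_2(\R^n_+)$, i.e.\ $\beta_1(u)=0$ and $\beta'_3(u;Q)=0$ for each $Q$. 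By density it suffices to verify this for $f\in\mathscr{B}^\infty$ and then pass to the limit using the Lipschitz dependence $\|u_f-u_g\|_{\mathcal{C}(\R^n_+)}\le C\|f-g\|_{\BMO}$ to transfer the vanishing conditions (a standard $3\varepsilon$-argument, once one notes $\beta_1$ and $\beta'_3(\,\cdot\,;Q)$ are sublinear and dominated by $\|\cdot\|_{\mathcal{C}(\R^n_+)}$). For $f\in\mathscr{B}^\infty$ the bound $|\nabla u(x',t)|\lesssim\min\{t^{-1}\|f\|_\infty\text{-type},\ \|\nabla f\|_{L^\infty(\text{relevant ball})}\}$ coming from differentiating the Poisson integral gives both $\beta_1(u)=0$ (small boxes see small $\nabla f$ only if $f$ is also, say, uniformly continuous — here one uses instead the general $\beta_1(u)=0$ valid for all $f\in\VMO\supset\XMO$) and $\beta'_3(u;Q)=0$ (far-away boxes of fixed size see the decay $\nabla f\to0$ at infinity).

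\textbf{Main obstacle.} The crux is the equivalence, \emph{quantitative and uniform in the position and scale of the cube}, between the localized Carleson functional of $u=P_t^L*f$ over $T_{Q}$ and the mean oscillation of $f$ over $Q$ (and over a controlled dilate of $Q$). For a single system $L$ without symmetry this is exactly the technical heart already developed in \cite{MMMM19, MMM}; the new difficulty here is that $\V_2$ mixes a small-scale vanishing condition ($\beta_1$) with a spatial-decay-at-infinity condition ($\beta'_3$) at \emph{every fixed scale}, so one must track how the Poisson extension propagates decay of $\nabla f$ at infinity into decay of the Carleson box average at infinity, uniformly, and conversely reconstruct $\lim_{|x|\to\infty}|\nabla(f*\phi_\varepsilon)(x)|=0$ from $\beta'_3(u;Q)=0$. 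Handling the interplay of these two regimes — and ensuring the approximating functions genuinely land in $\mathscr{B}^\infty$ (or at least $\mathscr{B}^1$) rather than merely in $\VMO$ — is where the real work lies; everything else is a transcription of the $\CMO$ argument with $\V_3$, $\beta_2$, $\beta_3$ replaced by $\V_2$, $\beta'_3(\,\cdot\,;Q)$.
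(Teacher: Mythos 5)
Your high-level plan is right: extract the trace $f=\restr{u}{\partial\R^n_+}^{\rm n.t.}$ from the $\BMO$/$\VMO$-Fatou theorem of \cite{MMMM19} (using $\V_2\subset\V_1$), then upgrade membership to $\XMO$. The genuine gap is in the upgrade step. You propose to mollify $f$, land $f\ast\phi_\varepsilon$ in $\mathscr{B}^\infty$ (or $\mathscr{B}^1$), and conclude via the closure definition of $\XMO$. But the only justification offered for the key claim $\lim_{|x|\to\infty}|\nabla(f\ast\phi_\varepsilon)(x)|=0$ is ``use $\beta'_3(u;Q)=0$,'' with no mechanism. To substantiate it you would have to bound the mean oscillation of $f$ over a distant cube $Q$ by Carleson box averages over the dyadic dilates $T_{2^kQ}$ --- that is precisely Lemma \ref{lem:ff-Car}, which is a nontrivial duality argument with the kernel $\Phi=\partial_nK^L(\cdot,1)$ and involves the whole family $\{2^kQ\}_{k\ge 1}$, not a pointwise equivalence between $\frac{1}{|Q|}\iint_{T_Q}|\nabla u|^2\,t\,dx'dt$ and $\fint_Q|f-f_Q|^2$ (the one-sided estimate the other way, Lemma \ref{lem:SIO}, also needs the dyadic series). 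Once you have that estimate you obtain $\gamma'_3(f;Q)=0$ directly, and then Proposition \ref{pro:XMO} gives $f\in\XMO$ with no mollification at all; in other words, the step that would close your gap makes the $\mathscr{B}^\infty$ detour unnecessary. (Note also that $\mathscr{B}^\infty$ demands decay of \emph{all} derivatives, so you would need more than gradient decay, or else invoke the $\mathscr{B}^1$ variant.)

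On the choice of approximants: the paper approximates $f$ by the vertical shifts $f_\varepsilon=u(\cdot,\varepsilon)$, which are automatically smooth, lie in $\BMO$, converge to $f$ in $\BMO$ when $u\in\V_1$, and are shown to lie in $\XMO$ via Lemma \ref{lem:u-ue}\eqref{list:u-ue-6} (which again reduces to Lemma \ref{lem:ff-Car} and Proposition \ref{pro:XMO}); this avoids the $\BMO$-convergence subtlety of mollification and interacts naturally with the Carleson geometry. For the converse inclusion your density-plus-Lipschitz transfer could be made to work, but the paper's Proposition \ref{pro:existence} is more direct: it bounds the Carleson box functional of $u=P_t^L\ast f$ by a weighted series of oscillations $\fint_{2^kQ}|f-f_{2^kQ}|^2$ and then reads off $\beta_1(u)=\beta'_3(u;Q)=0$ from $\gamma_1(f)=\gamma'_3(f;Q)=0$ with no density step. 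The missing ingredient in your write-up is the oscillation-versus-Carleson lemma and the oscillation-style characterization of $\XMO$; supplying both essentially recreates the paper's argument.
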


This paper is organized as follows. Section \ref{sec:pre} collects useful background material and auxiliary results including the existence of the Poisson kernel and characterizations of $\CMO$ and $\XMO$ spaces. The proofs of the existence in Theorems \ref{thm:CMO} and \ref{thm:XMO}, for the $\CMO$-Dirichlet problem and the $\XMO$-Dirichlet problem, are carried out in Section \ref{sec:exist}. Finally, Section \ref{sec:unique} is devoted to showing Theorems \ref{thm:Fatou-CMO} and \ref{thm:Fatou-XMO}, which imply the uniqueness in the $\CMO$-Dirichlet problem and the $\XMO$-Dirichlet problem, respectively. 

\section{Preliminaries}\label{sec:pre}

For every elliptic system as in \eqref{eq:Lu}--\eqref{eq:elliptic}, there exists an associated Agmon-Douglis-Nirenberg Poisson kernel in $\R^n_+$  \cite{ADN1, ADN2}, see also \cite{KMR2, Sol}. This, \cite[Theorems 2.4 and 3.1]{MMMM16}, and \cite[Theorem 1.1]{M+5} (see also \cite[Theorem 3.2]{MMMM16}) allow us to formulate the following result:

\begin{theorem}\label{thm:Poisson}
Let $L$ be a second-order, homogeneous, constant complex coefficient, $N \times N$ elliptic system in $\R^n$ as in \eqref{eq:Lu}--\eqref{eq:elliptic}. Then there exists a matrix-valued function $P^L=(P^L_{\alpha\beta})_{1 \leq \alpha, \beta \leq N}: \R^{n-1} \to \mathbb{C}^{N \times N}$, called the Poisson kernel for $L$ in $\R^n_{+}$, satisfying the following properties: 

\begin{list}{\textup{(\theenumi)}}{\usecounter{enumi}\leftmargin=1cm \labelwidth=1cm \itemsep=0.2cm 
		\topsep=.2cm \renewcommand{\theenumi}{\alph{enumi}}}

\item\label{eq:Poissson-1} There exists $C \in (0, \infty)$ such that 
\begin{equation}\label{eq:Poisson-decay}
|P^L(x')|\leq\frac{C}{(1+|x'|^2)^{n/2}}, \qquad x' \in \R^{n-1}. 
\end{equation} 

\item\label{eq:Poissson-2} The function $P^L$ is Lebesgue measurable and 
\begin{equation}\label{eq:Identity-1}
\int_{\R^{n-1}} P^L(x') \, dx' = I_{N \times N},
\end{equation} 
where $I_{N \times N}$ denotes the $N \times N$ identity matrix. In particular, for every constant vector $C=(C_{\alpha})_{1 \le \alpha \le N} \in \mathbb{C}^N$ one has 
\begin{equation}\label{eq:Identity-2}
\int_{\R^{n-1}} \sum_{1 \le \beta \le N} (P^L_{\alpha \beta})_t (x'-y') C_{\beta}\, dy' 
= C_{\alpha}, \quad\forall (x', t) \in \R^n_+. 
\end{equation}

\item\label{eq:Poisson-3} If one sets 
\begin{align}\label{def:KL}
K^L(x',t) := P_t^L(x')= t^{1-n}P^L(x'/t), \quad \forall (x', t) \in \R^n_+,
\end{align}
then the function $K^L:=\big(K^L_{\alpha \beta}\big)_{1\leq\alpha,\beta\leq N}$ satisfies (in the sense of distributions) 
\begin{equation}\label{eq:Poisson-LK0}
LK^L_{\cdot\beta}=0\,\,\text{ in }\, \, \R^n_{+} \quad \text{ for every }\quad \beta \in \{1,\dots, N\},
\end{equation}
where $K^L_{\cdot\beta}:=\big(K^L_{\alpha\beta}\big)_{1\leq\alpha\leq N}$ is the $\beta$-th column in $K^L$. 

Moreover, $P^L$ is unique in the class of $\CC^{N \times N}$-valued functions defined in $\R^{n-1}$ and satisfying \eqref{eq:Poissson-1}--\eqref{eq:Poisson-3} above, and has the following additional properties: 

\item One has $P^L \in \C^\infty(\R^{n-1})$, and $K^L \in \C^\infty(\overline{\R^n_+} \setminus \{0\}\big)$. Consequently, \eqref{eq:Poisson-LK0} holds in a pointwise sense. 

\item There holds $K^L(\lambda x)=\lambda^{1-n} K^L(x)$ for all $x \in \R^n_+$ and $\lambda>0$. In particular, for each multi-index $\alpha \in \N_0^n$ there exists $C_{\alpha} \in (0, \infty)$ with the property that 
\begin{align}\label{eq:Poisson-smooth}
|\partial^{\alpha} K^L(x)| \le C_{\alpha} |x|^{1-n-|\alpha|}, \quad\forall x \in \overline{\R^n_+} \setminus\{0\}. 
\end{align}

\item Assume that $f=(f_\beta)_{1\le\beta\le N}: \R^{n-1} \to \mathbb{C}^N$ is a Lebesgue measurable function such that 
\begin{equation}\label{zdfgwseg}
\int_{\R^{n-1}} \frac{|f(x')|}{1+|x'|^n} dx' < \infty. 
\end{equation}
Then the function $u(x',t):=P^L_t\ast f(x')$ for every $(x', t) \in \R^n_+$ is meaningfully defined via an absolutely convergent integral, and for every aperture $\kappa>0$, satisfies 
\begin{equation}\label{eq:Poisson-u-prop} 
u \in \C^\infty(\R^n_{+}, \mathbb{C}^N),\quad\, 
Lu=0\,\,\text{ in }\,\,\R^n_{+},\quad\,\,
u\big|^{{}^{\kappa-{\rm n.t.}}}_{\partial\R^n_{+}} = f \text{ a.e.~on }\,\R^{n-1}, 
\end{equation}
and there is a constant $C \in (0, \infty)$ such that 
\begin{equation}\label{eq:Poisson-u-prop2} 
|f(x')|\leq\mathcal{N}_{\kappa}u(x')\leq C_{\kappa}\,M f(x'), \qquad \text{for a.e.~} x' \in \R^{n-1}.
\end{equation}

\end{list}
\end{theorem}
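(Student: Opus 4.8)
The plan is to prove Theorem~\ref{thm:Poisson} by invoking the Agmon--Douglis--Nirenberg construction and then verifying each listed property. Most of the work is really a matter of organizing known facts, so I would proceed as follows. First, I would recall the ADN construction of a Poisson kernel for $L$ in $\R^n_+$: one solves the system $Lu=0$ with boundary data $\delta_0 \cdot I$ using the Fourier transform in the tangential variable $x'$, which reduces the PDE to an ODE system in $t$ whose decaying solution is explicit; the ellipticity condition \eqref{eq:elliptic} guarantees that the associated characteristic matrix is invertible away from $\xi'=0$, so the decaying solution exists and is unique. Homogeneity of $L$ then forces $K^L(x',t)=t^{1-n}P^L(x'/t)$, giving item~(c). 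For items (a) and (d), I would note that $P^L$ extends to a function real-analytic in $x'$ (since the symbol is a rational function with no real poles away from the origin after the reduction), and the decay \eqref{eq:Poisson-decay} follows by a standard stationary-phase / integration-by-parts argument on the Fourier inversion integral, exploiting that the symbol is smooth and homogeneous; equivalently one cites \cite[Theorems 2.4 and 3.1]{MMMM16} directly.

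For item~(b), the normalization \eqref{eq:Identity-1} comes from evaluating the solution $u(x',t)=P^L_t*(\text{const})$ at the boundary: since the constant vector $C$ is itself a (trivial) bounded null-solution of $L$ attaining boundary trace $C$, and since solutions with bounded nontangential maximal function attaining a given boundary datum are unique (the $L^\infty$ uniqueness from \cite{MMMM16}), we must have $P^L_t * C = C$, which after letting the integral act on constants yields $\int P^L = I_{N\times N}$; \eqref{eq:Identity-2} is then just the dilation/translation-invariant restatement. The uniqueness clause for $P^L$ within the class (a)--(c) follows because any two such kernels produce, via convolution, two null-solutions with the same nontangential trace and controlled maximal function, hence agree. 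For item~(e), the homogeneity $K^L(\lambda x)=\lambda^{1-n}K^L(x)$ is immediate from \eqref{def:KL}, and differentiating it gives the scaling bound \eqref{eq:Poisson-smooth} once one knows $K^L \in \C^\infty(\overline{\R^n_+}\setminus\{0\})$, which in turn follows from elliptic regularity (interior, plus the half-space regularity theory for constant-coefficient systems up to the boundary away from the singularity) combined with \eqref{eq:Poisson-LK0}.

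Finally, for item~(f), given $f$ satisfying \eqref{zdfgwseg}, the bound \eqref{eq:Poisson-decay} shows $|P^L_t(x'-y')| \lesssim t/(t^2+|x'-y'|^2)^{n/2}$, which against the weight $1+|y'|^n$ makes the convolution integral absolutely convergent for each fixed $(x',t) \in \R^n_+$; smoothness of $u$ and $Lu=0$ then follow by differentiating under the integral sign (justified by \eqref{eq:Poisson-smooth} and dominated convergence) and \eqref{eq:Poisson-LK0}. The nontangential convergence $u|^{\kappa-\rm n.t.}_{\partial\R^n_+}=f$ a.e.\ is the classical approximate-identity argument: \eqref{eq:Poisson-decay} and \eqref{eq:Identity-1} say $\{P^L_t\}_{t>0}$ is a (matrix) approximate identity dominated by a radially decreasing $L^1$ kernel, so Lebesgue-point theory gives a.e.\ convergence, and the nontangential refinement comes from the standard observation that for $(y',t)\in\Gamma_\kappa(x')$ the kernel $P^L_t(y'-\cdot)$ is still dominated by a fixed dilate of the radial majorant centered at $x'$. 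That same domination yields $\mathcal{N}_\kappa u \le C_\kappa Mf$ in \eqref{eq:Poisson-u-prop2}, while the lower bound $|f(x')| \le \mathcal{N}_\kappa u(x')$ holds at every Lebesgue point of $f$ because the nontangential limit equals $f(x')$ there. The main obstacle, if any, is the decay estimate \eqref{eq:Poisson-decay} and the up-to-the-boundary smoothness: these rest on the ADN theory rather than on soft arguments, so I would lean on the cited references \cite{ADN1, ADN2, KMR2, Sol, MMMM16, M+5} for those points and spend the bulk of the write-up on the approximate-identity and uniqueness arguments, which are self-contained given \eqref{eq:Poisson-decay}--\eqref{eq:Identity-1}.
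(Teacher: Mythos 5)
Your proposal is correct and takes essentially the same approach as the paper: the paper itself does not reprove Theorem \ref{thm:Poisson} but presents it as a compilation of facts imported from the Agmon--Douglis--Nirenberg theory \cite{ADN1, ADN2} (see also \cite{KMR2, Sol}) together with \cite[Theorems 2.4 and 3.1]{MMMM16} and \cite[Theorem 1.1]{M+5}, which is precisely the set of references you lean on for the decay estimate, up-to-the-boundary smoothness, and Fatou-type results, while your additional commentary on the approximate-identity argument, the normalization via $L^\infty$-uniqueness, and the homogeneity/scaling bound simply fills in the standard reductions implicit in that citation.
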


Let us present a characterization of $\XMO(\R^{n-1}, \mathbb{C}^N)$ and $\CMO(\R^{n-1}, \mathbb{C}^N)$. In the scalar-valued case, the characterization of $\XMO(\R^{n-1})$ was given in \cite[Theorem 1.2]{TXYY} with $p=1$. Following the proof of \cite[Theorem 1.2]{TXYY}, one can show a more general result with exponent $p \in [1, \infty)$ below. 

\begin{proposition}\label{pro:XMO}
Let $1 \le p<\infty$. Then $f \in \mathrm{XMO}(\R^{n-1}, \mathbb{C}^N)$ if and only if $f \in \BMO(\R^{n-1}, \mathbb{C}^N)$ satisfies the following three conditions:
\begin{align*}
\gamma_1(f) &:= \lim_{r \to 0} \sup_{Q \subset \R^{n-1}:\ell(Q) \leq r}
\bigg(\fint_Q |f(x')-f_Q|^p dx' \bigg)^{\frac1p} =0,
\\
\gamma'_3(f; Q) &:= \lim_{|x_Q| \to \infty} \bigg(\fint_{Q} |f(y')-f_Q|^p dy' \bigg)^{\frac1p} =0, 
\quad\text{for each cube } Q \subset \R^{n-1}. 
\end{align*}
\end{proposition}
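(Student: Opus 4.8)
The plan is to establish the two equivalences separately, treating $\mathrm{XMO}$ as the $\BMO$-closure of $\mathscr{B}^{\infty}(\R^{n-1},\CC^N)$ (or, equivalently, of $\mathscr{B}^1$), and to prove that the two functionals $\gamma_1$ and $\gamma_3'(\,\cdot\,;Q)$ are each stable under $\BMO$-limits while vanishing on the dense subclass. First I would record the elementary observation that, by H\"older's inequality and the John--Nirenberg inequality, the $L^p$-oscillation quantities appearing in $\gamma_1$ and $\gamma_3'$ are, up to dimensional constants, comparable to their $p=1$ counterparts uniformly over cubes; consequently it suffices to treat a single convenient exponent, say $p=1$, and the case of general $p\in[1,\infty)$ follows. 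This reduction is where the bulk of the "analytic" content sits, but it is standard.

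Next I would prove the forward implication. Suppose $f\in\mathrm{XMO}(\R^{n-1},\CC^N)$ and pick $g\in\mathscr{B}^{\infty}(\R^{n-1},\CC^N)$ with $\|f-g\|_{\BMO}<\varepsilon$. Since $\nabla g\to 0$ at infinity and $g\in\C^1$, a mean-value estimate gives that for cubes $Q$ of small sidelength the oscillation $\fint_Q|g-g_Q|\lesssim \ell(Q)\sup_{Q}|\nabla g|$ is small, which handles $\gamma_1(g)=0$; and for a \emph{fixed} cube $Q$ translated to infinity the same mean-value bound gives $\fint_Q|g-g_Q|\lesssim \ell(Q)\sup_{x_Q+Q}|\nabla g|\to 0$, so $\gamma_3'(g;Q)=0$. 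Then the triangle inequality in the form $\fint_Q|f-f_Q|\le \fint_Q|g-g_Q|+2\|f-g\|_{\BMO}$ lets one pass these two properties from $g$ to $f$ after letting $\varepsilon\downarrow 0$. The reverse implication is the substantive direction: given $f\in\BMO$ with $\gamma_1(f)=0$ and $\gamma_3'(f;Q)=0$ for every $Q$, one must produce approximants in $\mathscr{B}^{\infty}$ (or $\mathscr{B}^1$). The natural construction is a double regularization: mollify $f$ at scale $\delta$ to get $f_\delta:=f*\varphi_\delta\in\C^\infty\cap\BMO$ with $\|f-f_\delta\|_{\BMO}\lesssim$ (a quantity controlled by $\gamma_1$), and then multiply by a smooth cutoff $\chi_R$ that is $1$ on $Q(0,R)$ and supported in $Q(0,2R)$ to force the decay of all derivatives at infinity; the error $\|f_\delta-\chi_R f_\delta\|_{\BMO}$ is then controlled using the condition $\gamma_3'(f;Q)=0$ (which says $f$ is "locally almost constant near infinity") together with a telescoping/annular decomposition of the $\BMO$-norm of $(1-\chi_R)f_\delta$. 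Choosing $\delta=\delta(R)\to 0$ as $R\to\infty$ appropriately yields a sequence in $\mathscr{B}^{\infty}$ converging to $f$ in $\BMO$.

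The main obstacle I anticipate is precisely the bookkeeping in the reverse direction: showing that the cutoff error $\|(1-\chi_R)f_\delta\|_{\BMO}\to 0$ requires carefully splitting the supremum over cubes $Q$ into the regimes $\ell(Q)$ small (controlled by the mollification, via $\gamma_1$), $Q$ deep inside $Q(0,R)$ (where $1-\chi_R\equiv 0$), $Q$ far from the origin (where $\gamma_3'$ forces $f$ to have small oscillation on $Q$, hence so does $(1-\chi_R)f$ up to the contribution of $\nabla\chi_R$, which is $O(1/R)$ times the local average of $f$), and the transitional cubes straddling $\partial Q(0,R)$ with $\ell(Q)\sim R$ — the last family being the delicate one, where one uses that the average of $f$ over such a cube grows at most logarithmically (a consequence of $f\in\BMO$) against the $1/R$ decay of $\nabla\chi_R$. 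For this step I would follow the argument of \cite[Theorem 1.2]{TXYY} essentially verbatim, noting that nothing in it is sensitive to the vector-valued nature of $f$ — one simply applies the scalar argument componentwise, or works directly with $|\cdot|$ on $\CC^N$ — and that the passage from $p=1$ to general $p$ is exactly the John--Nirenberg reduction recorded at the outset. Finally, the displayed inclusions \eqref{eq:CVBMO} and the characterization via $\mathscr{B}^1$ are obtained along the way: $\gamma_1(f)=0$ alone characterizes $\VMO$, so $\mathrm{XMO}\subset\VMO$ is immediate from this proposition, while $\CMO\subsetneq\mathrm{XMO}$ follows since $\C_c^\infty\subset\mathscr{B}^\infty$ and the inclusions are strict by exhibiting, as in \cite{TXYY,TX}, a function satisfying $\gamma_1=\gamma_3'=0$ but not lying in $\CMO$ (e.g.\ one with nonvanishing mean oscillation on large cubes far from the origin, which is ruled out for $\CMO$ but not by $\gamma_1,\gamma_3'$).
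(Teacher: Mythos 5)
Your approach mirrors the paper's, which offers no independent argument for this proposition: it simply invokes \cite[Theorem 1.2]{TXYY} for the scalar $p=1$ case and asserts that the vector-valued and general-$p$ extensions (via componentwise application and John--Nirenberg) are routine --- exactly the reductions you record at the outset. Your forward direction and the reduction to $p=1$ are both correct as written.

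One small but genuine inaccuracy in your sketch of the reverse direction: the claim that $\|(1-\chi_R)f_\delta\|_{\BMO}\to 0$ is false as stated. Take $f$ transitioning smoothly from $0$ near the origin to a nonzero constant $c$ at infinity with all derivatives decaying; this $f$ lies in $\mathscr{B}^\infty\subset\mathrm{XMO}$, yet on cubes of sidelength $\sim R$ straddling $\partial Q(0,R)$ the function $(1-\chi_R)f$ oscillates by $\sim|c|$, so its $\BMO$ norm does not tend to zero. The standard fix --- present in Uchiyama's $\CMO$ argument and in \cite{TXYY} --- is to cut off $f_\delta - c_R$ for a suitable averaged constant $c_R$ (e.g.\ the mean of $f_\delta$ over $Q(0,2R)\setminus Q(0,R)$) and then add the constant back, which is harmless since constants lie in $\mathscr{B}^\infty$. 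Since you ultimately defer to \cite{TXYY} for this step, the plan is sound, but the intermediate claim as you wrote it would need this normalization.
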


It should be pointed out that the next result was first announced by Neri \cite{Neri} without proof, and explicitly shown by Uchiyama \cite{Uch} for $p=1$, by Deng et al. \cite{DDSTY} for $p=2$, and by Ding and Mei \cite{DM} for $1 \le p<\infty$. Moreover, the three limiting conditions below are mutually independent, see \cite{Daf}.

\begin{proposition}\label{pro:CMO}
Let $1 \le p<\infty$. Then $f \in \CMO(\R^{n-1}, \mathbb{C}^N)$ if and only if $f \in \BMO(\R^{n-1}, \mathbb{C}^N)$ satisfies the following three conditions:
\begin{align*}
\gamma_1(f) &:= \lim_{r \to 0} \sup_{Q \subset \R^{n-1}:\ell(Q) \leq r}
\bigg(\fint_Q |f(x')-f_Q|^p dx' \bigg)^{\frac1p} =0,
\\
\gamma_2(f) &:= \lim_{r \to \infty} \sup_{Q \subset \R^{n-1}:\ell(Q) \geq r}
\bigg(\fint_Q |f(x')-f_Q|^p dx' \bigg)^{\frac1p} =0,
\\
\gamma_3(f) &:= \lim_{r \to \infty} \sup_{Q \subset \R^{n-1} \setminus Q(0,r)}
\bigg(\fint_Q |f(x')-f_Q|^p dx' \bigg)^{\frac1p} =0. 
\end{align*}
\end{proposition}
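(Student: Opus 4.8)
The plan is to reduce the matter to the known scalar-valued case, then record the (standard) equivalence between the stated three-condition characterization for a general exponent $p \in [1,\infty)$ and the one for $p=1$, so that the cited results of Uchiyama \cite{Uch}, Deng et al. \cite{DDSTY}, and Ding--Mei \cite{DM} apply directly. Concretely, write $f = (f_\alpha)_{1 \le \alpha \le N}$ with each $f_\alpha \in L^1_{\loc}(\R^{n-1})$. I would first observe that $f \in \CMO(\R^{n-1}, \mathbb{C}^N)$ if and only if $f_\alpha \in \CMO(\R^{n-1})$ for every $\alpha$: one direction follows because the coordinate projections $\mathbb{C}^N \to \mathbb{C}$ are bounded, hence map $\C_c^\infty(\R^{n-1}, \mathbb{C}^N)$ into $\C_c^\infty(\R^{n-1})$ and are continuous in the $\BMO$ norms; for the converse, if $f_\alpha$ is approximated in $\BMO(\R^{n-1})$ by $g_\alpha^{(j)} \in \C_c^\infty(\R^{n-1})$, then $g^{(j)} := (g_\alpha^{(j)})_\alpha \in \C_c^\infty(\R^{n-1}, \mathbb{C}^N)$ approximates $f$ in $\BMO(\R^{n-1}, \mathbb{C}^N)$, using the elementary comparison $\|h\|_{\BMO(\R^{n-1}, \mathbb{C}^N)} \simeq \sum_\alpha \|h_\alpha\|_{\BMO(\R^{n-1})}$ (with constants depending only on $N$). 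Likewise, the three quantities $\gamma_1(f)$, $\gamma_2(f)$, $\gamma_3(f)$ vanish if and only if the corresponding scalar quantities $\gamma_i(f_\alpha)$ vanish for each $\alpha$, again by comparison of $|f(x') - f_Q|$ with $\sum_\alpha |f_\alpha(x') - (f_\alpha)_Q|$ and the fact that a finite sum of nonnegative limits vanishes iff each summand does.

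Next I would dispose of the exponent. For a fixed scalar function $g \in \BMO(\R^{n-1})$, the John--Nirenberg inequality gives, for every cube $Q$ and every $p \in [1,\infty)$,
\[
\bigg(\fint_Q |g(x') - g_Q|^p \, dx'\bigg)^{1/p}
\le C_{n,p}\, \fint_Q |g(x') - g_Q| \, dx',
\]
while Jensen's inequality gives the reverse bound with constant $1$. Taking suprema over the relevant families of cubes and then the appropriate limit ($r \to 0$, $r \to \infty$, or $|x_Q| \to \infty$) shows that $\gamma_i^{(p)}(g) = 0$ if and only if $\gamma_i^{(1)}(g) = 0$, for $i = 1, 2, 3$, where the superscript records the exponent. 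Hence the three-condition characterization is independent of $p \in [1,\infty)$.

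Combining the two reductions: $f \in \CMO(\R^{n-1}, \mathbb{C}^N)$ iff each $f_\alpha \in \CMO(\R^{n-1})$ iff (by \cite{Uch, DDSTY, DM}, using the $p$-independence just noted) each $f_\alpha \in \BMO(\R^{n-1})$ satisfies $\gamma_1(f_\alpha) = \gamma_2(f_\alpha) = \gamma_3(f_\alpha) = 0$ iff $f \in \BMO(\R^{n-1}, \mathbb{C}^N)$ satisfies $\gamma_1(f) = \gamma_2(f) = \gamma_3(f) = 0$. This is the claim. The only mild subtlety — the part worth stating with a touch of care rather than a true obstacle — is the passage from scalar approximation to vector approximation in the equivalence $f \in \CMO(\R^{n-1},\mathbb{C}^N) \iff f_\alpha \in \CMO(\R^{n-1})\ \forall \alpha$: one must make sure the scalar approximants can be assembled componentwise into a single $\mathbb{C}^N$-valued test function, which is immediate here since $\C_c^\infty(\R^{n-1}, \mathbb{C}^N)$ is exactly the $N$-fold direct sum of $\C_c^\infty(\R^{n-1})$. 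No genuine difficulty arises; the proposition is essentially a bookkeeping consequence of the scalar theory, which is why it is placed among the preliminaries.
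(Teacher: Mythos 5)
The paper itself does not prove this proposition; it is stated with citations to \cite{Neri, Uch, DDSTY, DM} (with \cite{DM} already covering general $1\le p<\infty$). Your componentwise reduction to the scalar case is correct, and the comparability $\|h\|_{\BMO(\R^{n-1},\CM)}\simeq\sum_\alpha\|h_\alpha\|_{\BMO(\R^{n-1})}$, together with its $\gamma_i$ counterparts, is sound.

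The exponent-reduction step, however, rests on a misstated form of John--Nirenberg. The displayed inequality
\begin{equation*}
\Big(\fint_Q |g-g_Q|^p\,dx'\Big)^{1/p}\le C_{n,p}\,\fint_Q |g-g_Q|\,dx'
\end{equation*}
is false as a cube-by-cube statement, even for a single fixed $g\in\BMO(\R^{n-1})$: the genuine John--Nirenberg bound has $\|g\|_{\BMO}$ (or the localized norm $\|g\|_{\BMO(Q)}:=\sup_{Q'\subset Q}\fint_{Q'}|g-g_{Q'}|$) on the right, not the $L^1$-oscillation over that one cube. For a concrete counterexample take $g=\log(1/|x|)\,\mathbf{1}_{[-1,1]}\in\BMO(\R)$ and $Q_R=[-R,R]$: then $\fint_{Q_R}|g-g_{Q_R}|\simeq R^{-1}$ while $(\fint_{Q_R}|g-g_{Q_R}|^p)^{1/p}\simeq R^{-1/p}$ for $p>1$, so the ratio is unbounded.

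The $p$-independence you want is still true, but it needs a different argument. For $\gamma_1$ and $\gamma_3$ the families of test cubes are closed under passage to subcubes, so $\gamma_i^{(1)}(g)=0$ gives $\|g\|_{\BMO(Q)}$ uniformly small over the relevant $Q$, and the localized John--Nirenberg inequality on $Q$ then yields $\gamma_i^{(p)}(g)=0$; this repairs those two cases. For $\gamma_2$ a large cube has small subcubes and the local argument breaks down, so argue instead via $\CMO$: each $\gamma_i^{(p)}$ is sublinear, satisfies $\gamma_i^{(p)}(h)\le C_{n,p}\|h\|_{\BMO}$ by the correct John--Nirenberg inequality, and vanishes on $\C_c^\infty(\R^{n-1})$, whence $\gamma_i^{(p)}(f)\le C_{n,p}\|f-g\|_{\BMO}$ for any test function $g$; thus $f\in\CMO$ forces $\gamma_i^{(p)}(f)=0$ for all $i,p$, and the trivial Jensen direction together with Uchiyama's $p=1$ theorem closes the loop. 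Alternatively, simply cite \cite{DM} for general $p$ and drop the reduction of exponent altogether.
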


\section{The existence of the solution}\label{sec:exist}
The goal of this section is to show the existence of the solution. To this end, we present some fundamental estimates. 

\begin{lemma}
Fix $\varepsilon>0$ arbitrary. Then there exists a constant $C_{n, \varepsilon} \in (0, \infty)$ such that for each function $f \in L^1_{\loc}(\R^{n-1}, \mathbb{C}^N)$ and each cube $Q \subset \R^{n-1}$ with center $x'_Q \in \R^{n-1}$, there holds 
\begin{align}\label{eq:fff}
\int_{\R^{n-1}} \frac{|f(y')-f_Q|}{(\ell(Q)+|y' - x'_Q|)^{n-1+\varepsilon}} dy' 
\le \frac{C_{n, \varepsilon}}{\ell(Q)^{\varepsilon}} \sum_{k=0}^{\infty} 2^{-k \varepsilon} \fint_{2^k Q} |f(y')-f_{2^k Q}| dy'. 
\end{align}
\end{lemma}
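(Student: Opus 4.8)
The plan is to decompose the integral over $\R^{n-1}$ into dyadic annuli centered at $x_Q'$ and estimate each piece. First I would write
\[
\int_{\R^{n-1}} \frac{|f(y')-f_Q|}{(\ell(Q)+|y' - x'_Q|)^{n-1+\varepsilon}} dy'
= \sum_{k=0}^{\infty} \int_{A_k} \frac{|f(y')-f_Q|}{(\ell(Q)+|y' - x'_Q|)^{n-1+\varepsilon}} dy',
\]
where $A_0 := Q$ (or the ball $Q(x_Q',\ell(Q))$, up to harmless dimensional constants) and $A_k := 2^k Q \setminus 2^{k-1} Q$ for $k \ge 1$. On $A_k$ one has $\ell(Q)+|y'-x_Q'| \gtrsim 2^{k}\ell(Q)$, so the denominator is bounded below by $c\, (2^k \ell(Q))^{n-1+\varepsilon}$, and $|A_k| \le |2^k Q| = 2^{k(n-1)} \ell(Q)^{n-1}$. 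Hence the $k$-th term is
\[
\lesssim \frac{1}{(2^k \ell(Q))^{n-1+\varepsilon}} \int_{2^k Q} |f(y')-f_Q| dy'
= \frac{2^{-k\varepsilon}}{\ell(Q)^{\varepsilon}} \fint_{2^k Q} |f(y')-f_Q| dy'.
\]

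Next I would replace $f_Q$ by $f_{2^k Q}$ inside the average. The triangle inequality gives
\[
\fint_{2^k Q} |f(y')-f_Q| dy' \le \fint_{2^k Q} |f(y')-f_{2^k Q}| dy' + |f_{2^k Q} - f_Q|,
\]
and the standard telescoping estimate $|f_{2^k Q} - f_Q| \le \sum_{j=1}^{k} |f_{2^j Q} - f_{2^{j-1}Q}| \le \sum_{j=1}^{k} 2^{n-1}\fint_{2^j Q}|f(y')-f_{2^j Q}|dy'$ (since $2^{j-1}Q \subset 2^j Q$ with $|2^j Q| = 2^{n-1}|2^{j-1}Q|$) controls the second term by a constant times $\sum_{j=0}^{k} \fint_{2^j Q}|f - f_{2^j Q}|$. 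Substituting back, the left side of \eqref{eq:fff} is bounded by
\[
\frac{C_n}{\ell(Q)^{\varepsilon}} \sum_{k=0}^{\infty} 2^{-k\varepsilon} \sum_{j=0}^{k} \fint_{2^j Q} |f(y')-f_{2^j Q}| dy'.
\]
Finally I would swap the order of summation: the coefficient of $\fint_{2^j Q}|f - f_{2^j Q}|$ is $C_n \sum_{k \ge j} 2^{-k\varepsilon} = C_n 2^{-j\varepsilon}/(1-2^{-\varepsilon})$, which is $C_{n,\varepsilon}\, 2^{-j\varepsilon}$. This yields exactly the right-hand side of \eqref{eq:fff}.

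There is no serious obstacle here; the estimate is routine once the dyadic decomposition is set up. The only points requiring a little care are: (i) handling the innermost region $k=0$ correctly, where the denominator is comparable to $\ell(Q)^{n-1+\varepsilon}$ rather than decaying, which is fine since it matches the $k=0$ term on the right; (ii) making sure the geometric series in $k$ converges, which is precisely where the strict positivity $\varepsilon>0$ is used and where the constant $C_{n,\varepsilon}$ acquires its dependence on $\varepsilon$ (it blows up as $\varepsilon \to 0^+$); and (iii) the telescoping control of $|f_{2^k Q} - f_Q|$, which contributes the nested double sum that the final Fubini step collapses. All constants depend only on $n$ and $\varepsilon$, as claimed.
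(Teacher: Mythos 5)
Your proposal is correct and follows essentially the same route as the paper: decompose into dyadic annuli centered at $x'_Q$, bound the denominator from below by $(2^k\ell(Q))^{n-1+\varepsilon}$ on each annulus, use the standard telescoping estimate to replace $f_Q$ by $f_{2^k Q}$, and swap the order of summation using $\varepsilon>0$ to sum the geometric tail. The paper phrases its telescoping estimate with a general exponent $p\in[1,\infty)$ (since it reuses that estimate elsewhere with $p=2$), but for this lemma it too specializes to $p=1$, so the arguments match in substance; the only other differences are cosmetic choices of annulus indexing.
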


\begin{proof}
Fix $f \in L^1_{\loc}(\R^{n-1}, \mathbb{C}^N)$ and a cube $Q \subset \R^{n-1}$ with center $x'_Q \in \R^{n-1}$. We compute 
\begin{align}\label{eq:fff-1}
&\int_{\R^{n-1}} \frac{|f(y')-f_Q|}{(\ell(Q)+|y' - x'_Q|)^{n-1+\varepsilon}} dy' 
\nonumber\\ 
&\quad \le \int_{Q} \frac{|f(y')-f_Q|}{\ell(Q)^{n-1+\varepsilon}} dy' 
+ \sum_{k=0}^{\infty} \int_{2^{k+1}Q \setminus 2^k Q} \frac{|f(y')-f_Q|}{|y' - x'_Q|^{n-1+\varepsilon}} dy' 
\nonumber\\ 
&\quad \lesssim \ell(Q)^{-\varepsilon}  \fint_{Q} |f(y')-f_Q| dy' 
+ \ell(Q)^{-\varepsilon} \sum_{k=0}^{\infty} 2^{-k \varepsilon} \fint_{2^{k+1}Q} |f(y')-f_Q| dy'. 
\end{align}
Let us consider the second term. For any $k \in \N_0$ and $p \in [1, \infty)$, we have 
\begin{align}\label{eq:tele}
\bigg(\fint_{2^{k+1} Q} |f-f_Q|^p dy' \bigg)^{\frac1p}
&\le \bigg(\fint_{2^{k+1} Q} |f-f_{2^{k+1} Q}|^p dy' \bigg)^{\frac1p}
+ \sum_{j=0}^k |f_{2^j Q} - f_{2^{j+1} Q}|
\nonumber\\ 
&\le \bigg(\fint_{2^{k+1} Q} |f-f_{2^{k+1}Q}|^p dy' \bigg)^{\frac1p}
+ \sum_{j=0}^k \fint_{2^j Q} |f- f_{2^{j+1} Q}| \, dy' 
\nonumber\\ 
&\lesssim \sum_{j=0}^k \bigg(\fint_{2^{j+1} Q} |f- f_{2^{j+1} Q}|^p \, dy' \bigg)^{\frac1p}, 
\end{align}
which gives that 
\begin{align}\label{eq:fff-2}
\sum_{k=0} 2^{-k \varepsilon} \fint_{2^{k+1}Q} |f(y')-f_Q| dy'
&\lesssim \sum_{k=0}^{\infty} 2^{-k \varepsilon} \sum_{j=1}^{k+1} \fint_{2^j Q} |f(y')- f_{2^j Q}| \, dy' 
\nonumber\\ 
&\lesssim \sum_{j=1}^{\infty} \bigg(\sum_{k=j-1}^{\infty} 2^{-k \varepsilon} \bigg) \fint_{2^j Q} |f(y')- f_{2^j Q}| \, dy' 
\nonumber\\ 
&\lesssim \sum_{j=0}^{\infty} 2^{-j \varepsilon} \fint_{2^j Q} |f(y')- f_{2^j Q}| \, dy'. 
\end{align}
Therefore, \eqref{eq:fff} follows at once from \eqref{eq:fff-1} and \eqref{eq:fff-2}. 
\end{proof}

\begin{lemma}\label{lem:SIO}
Let $\theta : \R^n_+ \times \R^n_+ \to \CC^{N \times N}$ be a matrix-valued Lebesgue measurable function, with the property that there exist $\varepsilon \in (0, \infty)$ and $C \in (0, \infty)$ such that 
\begin{align}\label{eq:size}
|\theta(x', t; y')| \le \frac{C \, t^{\varepsilon}}{|(x'-y', t)|^{n-1+\varepsilon}}, \quad\forall (x', t) \in \R^n_+,\, y' \in \R^{n-1}, 
\end{align}
and the cancellation condition holds: 
\begin{align}\label{eq:cancel}
\int_{\R^{n-1}} \theta(x', t; y') dy' = 0 \in \CC^{N \times N}, \quad\forall (x', t) \in \R^n_+. 
\end{align}
In relation to the kernel $\theta$, one may then consider the integral operator $\Theta$ acting on arbitrary function $f \in L^1(\R^{n-1}, dx'/(1+|x'|^{n-1+\varepsilon}))^N$ according to (the absolutely convergent integral) 
\begin{align}\label{eq:Theta}
\Theta f(x', t) := \int_{\R^{n-1}} \theta(x', t; y') f(y') \, dy', \quad (x', t) \in \R^n_+. 
\end{align}
Then the following statements hold:  
\begin{list}{\textup{(\theenumi)}}{\usecounter{enumi}\leftmargin=1cm \labelwidth=1cm \itemsep=0.2cm 
		\topsep=.2cm \renewcommand{\theenumi}{\alph{enumi}}}

\item\label{list-1} If in addition assume that $\theta$ satisfies 
\begin{align}\label{eq:smooth}
|\nabla_{y'} \theta(x', t; y')| \le \frac{C \, t^{\varepsilon}}{|(x'-y', t)|^{n+\varepsilon}}, \quad\forall (x', t) \in \R^n_+,\, y' \in \R^{n-1}, 
\end{align}
then 
\begin{align}\label{eq:L2-bound}
\Theta : L^2(\R^{n-1}, \mathbb{C}^N) \to L^2(\R^n_+, dx' dt/t)^N \text{ is bounded}. 
\end{align}

\item\label{list-2} Under the assumption \eqref{eq:L2-bound}, there exists a constant $C \in (0, \infty)$ such that for every $f \in L^1(\R^{n-1}, dx'/(1+|x'|^{n-1+\varepsilon}))^N$ and every cube $Q \subset \R^{n-1}$, 
\begin{align}\label{eq:TQ}
\bigg(\frac{1}{|Q|} \iint_{T_Q} |\Theta f(x', t)|^2 \frac{dx' dt}{t}\bigg)^{\frac12} 
\le C \sum_{k=0}^{\infty} 2^{-k \varepsilon} \bigg(\fint_{2^k Q} |f(x')-f_{2^k Q}|^2 \, dx' \bigg)^{\frac12}. 
\end{align}

\end{list}
\end{lemma}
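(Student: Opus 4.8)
The plan is to prove the two statements \eqref{eq:L2-bound} and \eqref{eq:TQ} separately, deriving the second from the first.

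\textbf{Part (a): the $L^2$-bound.} To establish \eqref{eq:L2-bound}, I would use the standard Littlewood--Paley/almost-orthogonality machinery for ``Littlewood--Paley--Stein'' type operators. Fix $t>0$ and let $\Theta_t$ denote the convolution-type operator $f \mapsto \Theta f(\cdot, t)$; the size bound \eqref{eq:size}, the cancellation \eqref{eq:cancel}, and the gradient bound \eqref{eq:smooth} say precisely that $\theta(x',t;y')$ is, for each $t$, a nice approximation-to-zero kernel at scale $t$ (a ``$\psi_t$''-type kernel with $\psi$ having mean zero, Hölder-type decay of order $\varepsilon$, and one order of smoothness). The goal is the square-function bound $\int_0^\infty \|\Theta_t f\|_{L^2(\R^{n-1})}^2 \, \tfrac{dt}{t} \lesssim \|f\|_{L^2(\R^{n-1})}^2$. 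I would prove this on the Fourier transform side when possible, but since $\theta$ need not be exactly convolution in $x'-y'$, the cleanest route is a Cotlar--Stein / Schur-test almost-orthogonality argument: compare $\Theta_t$ with a genuine smooth Littlewood--Paley projection $Q_s$ (a convolution with a mean-zero bump at scale $s$, with $\int_0^\infty Q_s^2 \,\tfrac{ds}{s} = \mathrm{Id}$ up to harmless terms), and show $\|\Theta_t Q_s\|_{L^2 \to L^2} \lesssim \min(t/s, s/t)^{\delta}$ for some $\delta \in (0, \varepsilon]$ by playing the cancellation of one factor against the size/smoothness of the other; then the Schur test on $\min(t/s,s/t)^\delta \,\tfrac{dt}{t}\tfrac{ds}{s}$ closes the estimate. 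The smoothness hypothesis \eqref{eq:smooth} is exactly what is needed to get the decay factor when $s \le t$ (mollifying $\theta(x',t;\cdot)$ against the mean-zero $Q_s$), while \eqref{eq:cancel} gives the decay when $t \le s$.

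\textbf{Part (b): the local Carleson-type estimate.} Assuming \eqref{eq:L2-bound}, I would decompose $f$ relative to the fixed cube $Q$. Write $f = f_{2Q} + (f - f_{2Q})\mathbf{1}_{2Q} + \sum_{k\ge 1} (f-f_{2Q})\mathbf{1}_{2^{k+1}Q\setminus 2^kQ} =: c + g_0 + \sum_{k\ge 1} g_k$. The constant piece contributes nothing because of the cancellation \eqref{eq:cancel}, i.e. $\Theta(c)(x',t) = \big(\int \theta(x',t;y')\,dy'\big) c = 0$. For the local piece $g_0$, apply \eqref{eq:L2-bound} directly:
\[
\iint_{T_Q} |\Theta g_0(x',t)|^2 \,\tfrac{dx'dt}{t} \le \|\Theta g_0\|_{L^2(\R^n_+,\,dx'dt/t)}^2 \lesssim \|g_0\|_{L^2}^2 = \int_{2Q}|f-f_{2Q}|^2 \lesssim |Q| \fint_{2Q}|f-f_{2Q}|^2,
\]
which is controlled by the $k=0,1$ terms on the right of \eqref{eq:TQ} after the usual telescoping $|f_{2Q} - f_{2^0Q}| \lesssim \fint_{2Q}|f - f_{2Q}|$ (this is where a computation like \eqref{eq:tele} from the previous lemma enters). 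For the far pieces $g_k$, $k\ge1$: when $(x',t) \in T_Q$ and $y' \in 2^{k+1}Q\setminus 2^kQ$ we have $|(x'-y',t)| \gtrsim 2^k\ell(Q)$ and $t \le \ell(Q)$, so the size bound \eqref{eq:size} gives $|\theta(x',t;y')| \lesssim t^\varepsilon (2^k\ell(Q))^{-(n-1+\varepsilon)}$; hence
\[
|\Theta g_k(x',t)| \lesssim \frac{t^\varepsilon}{(2^k\ell(Q))^{n-1+\varepsilon}} \int_{2^{k+1}Q} |f-f_{2Q}| \lesssim \frac{t^\varepsilon}{(2^k\ell(Q))^{\varepsilon}} \fint_{2^{k+1}Q}|f-f_{2Q}|.
\]
Squaring, integrating $\tfrac{dx'dt}{t}$ over $T_Q = Q\times(0,\ell(Q))$ (the $t$-integral of $t^{2\varepsilon-1}$ over $(0,\ell(Q))$ converges and produces $\ell(Q)^{2\varepsilon}$, the $x'$-integral produces $|Q|$), and taking square roots yields a bound by $|Q|^{1/2} \, 2^{-k\varepsilon}\, \fint_{2^{k+1}Q}|f-f_{2Q}|$. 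Summing over $k\ge1$ and applying the telescoping estimate $\fint_{2^{k+1}Q}|f-f_{2Q}| \lesssim \sum_{j=0}^{k+1} \fint_{2^jQ}|f-f_{2^jQ}|$ (again \eqref{eq:tele}), then reindexing the double sum as in \eqref{eq:fff-2}, collapses everything into $\sum_{j\ge0} 2^{-j\varepsilon}\big(\fint_{2^jQ}|f-f_{2^jQ}|^2\big)^{1/2}$ — here one uses Jensen to pass from the $L^1$ average to the $L^2$ average, which is legitimate since we only need an upper bound. Dividing by $|Q|^{1/2}$ gives \eqref{eq:TQ}.

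\textbf{Main obstacle.} The routine part is (b): it is just the size/cancellation bookkeeping plus the telescoping already worked out in the preceding lemma. The real work is (a), the square-function bound \eqref{eq:L2-bound}. The subtlety is that $\theta$ is \emph{not} assumed to be a convolution kernel in $x'-y'$, only to satisfy Calderón--Zygmund-type size and smoothness bounds \emph{in $y'$}, plus cancellation in $y'$; so one cannot simply pass to the Fourier transform. The almost-orthogonality estimate $\|\Theta_t Q_s\| \lesssim \min(t/s,s/t)^\delta$ must therefore be proved by direct kernel manipulation: the composition has kernel $\int \theta(x',t;z') (Q_s)(z'-y')\,dz'$, and one extracts a gain of $\min(s/t,t/s)^\delta$ by exploiting the mean-zero property of whichever of the two factors lives at the smaller scale (subtracting the value at a well-chosen point and using Hölder continuity/size decay of the other factor), being careful that $\theta$'s smoothness is only in the $y'$-variable. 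I expect this to be the technical heart of the proof, and it is likely handled by citing or adapting the corresponding result from \cite{MMMM19} or the Coifman--Meyer theory of such ``vertical square function'' operators; the cancellation condition \eqref{eq:cancel} together with \eqref{eq:smooth} is tailored exactly so that this goes through.
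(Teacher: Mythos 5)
Your proposal is correct and takes essentially the same approach as the paper: for part (a) the paper simply cites \cite[Proposition 3.2]{MMMM19} for the square-function bound, exactly as you anticipate, and for part (b) the paper likewise uses the cancellation \eqref{eq:cancel} to subtract a constant, applies \eqref{eq:L2-bound} to the local piece (supported on $4Q$ rather than your $2Q$), and estimates the far piece via the size bound \eqref{eq:size} together with the preparatory estimate \eqref{eq:fff} and the telescoping \eqref{eq:tele}. Your annulus-by-annulus decomposition of the far part plus reindexing is precisely what goes into proving \eqref{eq:fff}, so the two arguments differ only in bookkeeping, not in substance.
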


\begin{proof}
It suffices to show part \eqref{list-2} since part \eqref{list-1} is proved in \cite[Proposition 3.2]{MMMM19}. Let $f \in L^1(\R^{n-1}, dx'/(1+|x'|^{n-1+\varepsilon}))^N$ and $Q \subset \R^{n-1}$ be a cube with the center $x'_Q$. The cancellation condition \eqref{eq:cancel} enables us to write 
\begin{align}\label{eq:TQ-12}
\bigg(\frac{1}{|Q|} \iint_{T_Q} |\Theta f(x', t)|^2 \frac{dx' dt}{t}\bigg)^{\frac12} 
= \bigg(\frac{1}{|Q|} \iint_{T_Q} |\Theta (f-f_Q)(x', t)|^2 \frac{dx' dt}{t}\bigg)^{\frac12} 
\le {\rm I} + {\rm II}, 
\end{align}
where 
\begin{align*}
{\rm I} &:= \bigg(\frac{1}{|Q|} \iint_{T_Q} |\Theta ((f-f_Q) \mathbf{1}_{4Q})(x', t)|^2 \frac{dx' dt}{t}\bigg)^{\frac12}, 
\\
{\rm II} &:= \bigg(\frac{1}{|Q|} \iint_{T_Q} |\Theta ((f-f_Q) \mathbf{1}_{\R^{n-1} \setminus 4Q})(x', t)|^2 \frac{dx' dt}{t}\bigg)^{\frac12}.
\end{align*}
To control ${\rm I}$, we use the assumption \eqref{eq:L2-bound} and \eqref{eq:tele} for $k=1$ and $p=2$ to get 
\begin{align}\label{eq:TQ-1}
{\rm I} 
\lesssim \bigg(\fint_{4Q} |f-f_Q|^2 dy' \bigg)^{\frac12}
\lesssim \bigg(\fint_{2Q} |f-f_{2Q}|^2 dy' \bigg)^{\frac12}
+ \bigg(\fint_{4Q} |f-f_{4Q}|^2 dy' \bigg)^{\frac12}. 
\end{align}
In order to estimate ${\rm II}$, we note that 
\[
|y'-x'| \simeq |y'-x'_Q| \gtrsim \ell(Q), \quad \text{ for all } x' \in Q, \, y' \in \R^{n-1} \setminus 4Q, 
\]
which together with \eqref{eq:size} and \eqref{eq:fff} yields 
\begin{align*}
|\Theta ((f-f_Q) \mathbf{1}_{\R^{n-1} \setminus 4Q})(x', t)|
&\lesssim t^{\varepsilon} \int_{\R^{n-1} \setminus 4Q} \frac{|f(y')-f_Q|}{(t+|x'-y'|)^{n-1+\varepsilon}} dy' 
\\
&\lesssim t^{\varepsilon} \int_{\R^{n-1}} \frac{|f(y')-f_Q|}{(\ell(Q)+|y' - x'_Q|)^{n-1+\varepsilon}} dy' 
\\
& \lesssim \frac{t^{\varepsilon}}{\ell(Q)^{\varepsilon}} \sum_{k=0}^{\infty} 2^{-k \varepsilon} \fint_{2^k Q} |f(y')-f_{2^k Q}| dy'. 
\end{align*}
Then, this implies 
\begin{align}\label{eq:TQ-2}
{\rm II} &\lesssim \bigg(\int_0^{\ell(Q)} \frac{t^{2\varepsilon}}{\ell(Q)^{2\varepsilon}} \frac{dt}{t}\bigg)^{\frac12} 
\sum_{k=0}^{\infty} 2^{-k \varepsilon} \fint_{2^k Q} |f(y')-f_{2^k Q}| dy' 
\nonumber \\ 
&\lesssim \sum_{k=0}^{\infty} 2^{-k \varepsilon} \fint_{2^k Q} |f(y')-f_{2^k Q}| dy' 
\nonumber \\ 
&\le \sum_{k=0}^{\infty} 2^{-k \varepsilon} \bigg(\fint_{2^k Q} |f(y')-f_{2^k Q}|^2 dy' \bigg)^{\frac12}. 
\end{align}
At this stage, we conclude \eqref{eq:TQ} from \eqref{eq:TQ-12}--\eqref{eq:TQ-2}. 
\end{proof}

The following result states the existence of the solution of the $\CMO$-Dirichlet problem \eqref{eq:CMO} and the $\XMO$-Dirichlet problem \eqref{eq:XMO}, respectively. 

\begin{proposition}\label{pro:existence} 
Let $L$ be an $N \times N$ elliptic system with constant complex coefficients as in \eqref{eq:Lu}--\eqref{eq:elliptic} and let $P^L$ be the Poisson kernel for $L$ in $\R^n_+$ from Theorem {\rm \ref{thm:Poisson}}. Select $f \in L^1(\R^{n-1}, dx'/(1+|x'|^n))^N$ and set 
\[
u(x', t) := P_t^L*f(x'), \quad (x', t) \in \R^n_+. 
\]
Then $u$ is meaningfully defined via an absolutely convergent integral and satisfies 
\begin{align}\label{eq:uuff-1}
u \in \C^{\infty}(\R^n_+, \mathbb{C}^N), \quad Lu=0 \text{ in } \R^n_+, \, \, \text{ and } \, \, 
\restr{u}{\partial \R^n_+}^{\rm n.t.} =f \text{ a.e. in } \R^{n-1}. 
\end{align}
In addition, the function $u$ enjoyes the following properties: 
\begin{align}
\label{eq:uuff-2} & f \in \XMO(\R^{n-1}, \mathbb{C}^N) \quad \Longrightarrow \quad u \in \V_2(\R^n_+), 
\\
\label{eq:uuff-3} & f \in \CMO(\R^{n-1}, \mathbb{C}^N) \quad \Longrightarrow \quad u \in \V_3(\R^n_+).
\end{align}
\end{proposition}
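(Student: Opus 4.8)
The plan is to establish the three conclusions in \eqref{eq:uuff-1}, then the two implications \eqref{eq:uuff-2}--\eqref{eq:uuff-3} by exhibiting suitable square-function kernels and invoking Lemma~\ref{lem:SIO}. First, for \eqref{eq:uuff-1}: since $f \in L^1(\R^{n-1}, dx'/(1+|x'|^n))^N$ satisfies the integrability hypothesis \eqref{zdfgwseg} of Theorem~\ref{thm:Poisson}(f), the conclusions $u \in \C^\infty(\R^n_+, \mathbb{C}^N)$, $Lu = 0$, and $\restr{u}{\partial\R^n_+}^{\rm n.t.} = f$ a.e.\ are immediate from \eqref{eq:Poisson-u-prop}. (Here I would note that the class $L^1(\R^{n-1}, dx'/(1+|x'|^n))^N$ is contained in $L^1(\R^{n-1}, dx'/(1+|x'|^{n-1+\varepsilon}))^N$ for any $\varepsilon \in (0,1)$, so the operators from Lemma~\ref{lem:SIO} apply to $f$.)

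For the two implications, the key observation is that each partial derivative of $u$ is an integral operator against a kernel of the type covered by Lemma~\ref{lem:SIO}. Concretely, write $t\,\partial_j u(x',t) = \Theta_j f(x',t)$ where the kernel is $\theta_j(x',t;y') := t\,(\partial_j K^L)(x'-y', t)$ for $j \in \{1, \dots, n\}$ (derivatives in the full $\R^n$ variables). By the homogeneity/smoothness bound \eqref{eq:Poisson-smooth}, $|\theta_j(x',t;y')| \lesssim t\,|(x'-y',t)|^{-n} = t^{\varepsilon}\,|(x'-y',t)|^{-(n-1+\varepsilon)}$ with $\varepsilon = 1$, so the size condition \eqref{eq:size} holds; similarly $|\nabla_{y'}\theta_j| \lesssim t\,|(x'-y',t)|^{-(n+1)}$ gives the smoothness condition \eqref{eq:smooth}; and the cancellation \eqref{eq:cancel} follows by differentiating \eqref{eq:Identity-1} under the integral sign (for $j < n$ one can also integrate by parts in $y'$; for $j = n$ one uses $\int \partial_t K^L\,dy' = \partial_t \int K^L\,dy' = \partial_t(t^{1-n}\cdot t^{n-1} I) = 0$—more carefully, $\int P^L = I$ forces $\int K^L(\cdot,t)\,dy' = I$ for all $t$, hence its $t$-derivative vanishes). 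Thus Lemma~\ref{lem:SIO}(a) gives the $L^2 \to L^2(\R^n_+, dx'dt/t)$ boundedness needed to invoke part (b), and \eqref{eq:TQ} yields, after summing in $j$ and using $|\nabla u|^2 \simeq \sum_j |\partial_j u|^2$,
\begin{align}\label{eq:plan-key}
\bigg(\frac{1}{|Q|}\iint_{T_Q} |\nabla u(x)|^2\,t\,dx'\,dt\bigg)^{\frac12}
\le C \sum_{k=0}^{\infty} 2^{-k\varepsilon}\bigg(\fint_{2^kQ}|f(x')-f_{2^kQ}|^2\,dx'\bigg)^{\frac12}
\end{align}
for every cube $Q \subset \R^{n-1}$, with $\varepsilon = 1$.

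With \eqref{eq:plan-key} in hand the two implications follow by tracking which geometric family of cubes is being taken. For \eqref{eq:uuff-3}, given $f \in \CMO$, Proposition~\ref{pro:CMO} (with $p=2$) supplies $\gamma_1(f) = \gamma_2(f) = \gamma_3(f) = 0$; a routine argument splitting the geometric series in \eqref{eq:plan-key} at a large index (so the tail is small because $\sum 2^{-k\varepsilon} < \infty$ and $\sup_Q$ of the oscillation is finite, and the head is small because each $\gamma_i$-type quantity goes to zero) shows $\beta_1(u) = \beta_2(u) = \beta_3(u) = 0$, hence $u \in \V_3(\R^n_+)$; finiteness of $\|u\|_{\mathcal{C}(\R^n_+)}$ comes from taking $\sup_Q$ in \eqref{eq:plan-key}, bounded by $C\|f\|_{\BMO}$. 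For \eqref{eq:uuff-2}, given $f \in \XMO$, Proposition~\ref{pro:XMO} (with $p=2$) supplies $\gamma_1(f) = 0$ and $\gamma'_3(f;Q) = 0$ for each fixed $Q$; the same splitting of \eqref{eq:plan-key}—now with the dilates $2^kQ$ translating to infinity together with $Q$—gives $\beta_1(u) = 0$ and $\beta'_3(u;Q) = 0$ for each $Q$, i.e.\ $u \in \V_2(\R^n_+)$. The main obstacle I expect is the $\beta'_3$ (equivalently $\beta_3$) analysis: one must be careful that when $|x_Q| \to \infty$ with $\ell(Q)$ fixed, the dilated cubes $2^kQ$ for $k$ up to a slowly growing threshold still have centers escaping to infinity, so that $\gamma'_3$-smallness transfers to all the relevant terms of the sum uniformly; handling the interchange of the limit in $|x_Q|$ with the (truncated) sum, and absorbing the convergent tail using only the global $\BMO$ bound, is the delicate point. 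The verification of the cancellation condition \eqref{eq:cancel} for the normal derivative $j=n$ also warrants care, but reduces as indicated to differentiating the normalization \eqref{eq:Identity-1} in $t$.
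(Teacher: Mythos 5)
Your proposal mirrors the paper's own proof essentially step-for-step: same kernels $\theta^j(x',t;y') = t\,\partial_j K^L(x'-y',t)$ with size, smoothness, and cancellation verified from \eqref{eq:Poisson-smooth} and \eqref{eq:Identity-2}, same invocation of Lemma~\ref{lem:SIO} yielding the key inequality \eqref{eq:plan-key} with $\varepsilon=1$, and the same tail-splitting argument combined with Propositions~\ref{pro:CMO} and \ref{pro:XMO}. One small inaccuracy worth noting: $\beta'_3$ and $\beta_3$ are not equivalent, and the genuinely delicate step is the $\beta_3$ (CMO) case, where one must handle cubes of \emph{arbitrary} side length outside $Q(0,r)$; the paper resolves this exactly as your instinct suggests, by splitting on $\ell(Q)\ge r_0$ (controlled by $\gamma_2(f)=0$) versus $\ell(Q)<r_0$ (where taking $r\ge 2^{k+1}r_0$ forces $2^kQ\subset\R^{n-1}\setminus Q(0,r_0)$, controlled by $\gamma_3(f)=0$), while the $\beta'_3$ case for XMO is in fact trivial since translation by $x_Q\to\infty$ moves each $2^kQ$ to infinity with its side length fixed.
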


\begin{proof}
The estimate \eqref{eq:uuff-1} is shown in \cite[Proposition 3.1]{MMMM19}. To prove \eqref{eq:uuff-2}, fix $f \in \BMO(\R^{n-1}, \mathbb{C}^N)$, which along with \cite[Proposition 3.1]{MMMM19} gives that 
\begin{align}\label{eq:uuff-Car}
\text{$|\nabla u(x', t)|^2 \, d \, dx' dt$ is a Carleson measure in $\R^n_+$},  
\end{align}
that is, $\|u\|_{\mathcal{C}(\R^n_+)}<\infty$.

To proceed, write $K^L(x', t)=P_t^L(x')$ for every $(x', t) \in \R^n_+$. From \eqref{eq:Identity-2}, we see that 
\begin{align}\label{eq:grad-PL}
\int_{\R^{n-1}} \nabla^k [P_t^L(x'-y')] dy' =0, \quad\forall t>0, \, k \in \N. 
\end{align}
Moving on, for each $j \in \{1, \ldots, n\}$ and $\alpha, \beta \in \{1, \ldots, N\}$, set 
\[
\theta^j_{\alpha \beta}(x', t; y') := t \, \partial_j K^L_{\alpha \beta}(x'-y', t), \quad x', y' \in \R^{n-1}, \, t>0. 
\]
Observe that by \eqref{eq:Poisson-smooth} and \eqref{eq:grad-PL}, 
\begin{align}
\label{eq:theta-1} |\theta^j_{\alpha \beta}(x', t; y')| 
& = t |\partial_j K^L_{\alpha \beta}(x'-y', t)| \lesssim \frac{t}{|(x'-y', t)|^n}, 
\\
\label{eq:theta-2} |\nabla_{y'}\theta^j_{\alpha \beta}(x', t; y')| 
&\le t |\nabla^2 K^L_{\alpha \beta}(x'-y', t)| \lesssim \frac{t}{|(x'-y', t)|^{n+1}}. 
\end{align}
and 
\begin{align}\label{eq:theta-3}
\int_{\R^{n-1}} \theta^j_{\alpha \beta}(x', t; y') dy' 
= \int_{\R^{n-1}} t \, \partial_j K^L_{\alpha \beta}(x'-y', t) dy'
= t \, \partial_j \int_{\R^{n-1}}  K^L_{\alpha \beta}(y', t) dy'
=0. 
\end{align}
Writing $\Theta^j_{\alpha \beta}$ for the operator associated with the kernel $\theta^j_{\alpha \beta}$ as in \eqref{eq:Theta}. Granted this, from \eqref{eq:theta-1}--\eqref{eq:theta-3}, all hypotheses in Lemma \ref{lem:SIO} are verified for the matrix integral operators $\Theta^j:=(\Theta^j_{\alpha \beta})_{1\le \alpha, \beta \le N}$. In addition, 
\begin{align*}
\bigg(\frac{1}{|Q|} \iint_{T_Q} |\nabla u(x', t)|^2 t\, dx' dt\bigg)^{\frac12} 
&= \bigg(\frac{1}{|Q|} \iint_{T_Q} |t \nabla (P_t^L*f)(x')|^2 t\, dx' dt\bigg)^{\frac12} 
\\ \nonumber
&\le \sum_{j=1}^n \bigg(\frac{1}{|Q|} \iint_{T_Q} |t (\partial_j K^L(\cdot, t)*f)(x')|^2 t\, dx' dt\bigg)^{\frac12} 
\\ \nonumber
&= \sum_{j=1}^n \bigg(\frac{1}{|Q|} \iint_{T_Q} |\Theta^j f(x', t)|^2 \frac{dx' dt}{t}\bigg)^{\frac12}. 
\end{align*}
We then deduce from this and \eqref{eq:TQ} that 
\begin{align}\label{eq:ufQQ}
\bigg(\frac{1}{|Q|} \iint_{T_Q} |\nabla u(x', t)|^2 t\, dx' dt\bigg)^{\frac12} 
\lesssim \sum_{k=0}^{\infty} 2^{-k} \bigg(\fint_{2^k Q} |f-f_{2^k Q}|^2 \, dx' \bigg)^{\frac12}
=: \sum_{k=0}^{\infty} 2^{-k} a_k(Q). 
\end{align}

Now let us verify \eqref{eq:uuff-2} and \eqref{eq:uuff-3}.  Observe that 
\begin{align}\label{eq:supakQ}
\sup_{k \in \N_0} \sup_{Q \subset \R^{n-1}} a_k(Q) \lesssim \|f\|_{\BMO(\R^{n-1})}. 
\end{align}
Recall the definition of $\V_2(\R^n_+)$ and $\V_3(\R^n_+)$. To get \eqref{eq:uuff-2} and \eqref{eq:uuff-3}, by \eqref{eq:ufQQ}--\eqref{eq:supakQ} and Propositions \ref{pro:XMO} and \ref{pro:CMO},  it suffices to show that for any fix $k \in \N_0$,  
\begin{align}
\label{eq:rr-1} \gamma_1(f) =0 
&\quad\Longrightarrow\quad \lim_{r \to 0} \sup_{Q \subset \R^{n-1}:\ell(Q) \leq r} a_k(Q) = 0, 
\\
\label{eq:rr-2} \gamma_2(f) =0 
&\quad\Longrightarrow\quad \lim_{r \to \infty} \sup_{Q \subset \R^{n-1}:\ell(Q) \ge r} a_k(Q) =0, 
\\
\label{eq:rr-3} \gamma'_3(f; Q) =0 
&\quad\Longrightarrow\quad \lim_{|x_Q| \to \infty} a_k(Q) =0, \quad\text{for each cube } Q \subset \R^{n-1},  
\\
\label{eq:rr-23} \gamma_2(f) = \gamma_3(f) =0 &\quad\Longrightarrow\quad \lim_{r \to \infty} \sup_{Q \subset \R^{n-1} \setminus Q(0,r)} a_k(Q) =0. 
\end{align}
Indeed, \eqref{eq:rr-1}--\eqref{eq:rr-3} hold trivially. To show \eqref{eq:rr-23}, we fix $r>0$ and $Q \subset \R^{n-1} \setminus Q(0, r)$. The fact $\gamma_2(f)=\gamma_3(f)=0$ implies that for any given $\eta>0$, there exists $r_0=r_0(\eta)>0$ such that  
\begin{align}
\label{eq:gaaf-2} & \sup_{Q' \subset \R^{n-1}:\ell(Q') \geq r_0} 
\bigg(\fint_{Q'} |f(x')-f_{Q'}|^2 dx' \bigg)^{\frac12} < \eta,
\\
\label{eq:gaaf-3} & \sup_{Q' \subset \R^{n-1} \setminus Q(0, r_0)} 
\bigg(\fint_{Q'} |f(x')-f_{Q'}|^2 dx' \bigg)^{\frac12} < \eta. 
\end{align}
If $\ell(Q) \ge r_0$, then $\ell(2^k Q) \ge r_0$ and by \eqref{eq:gaaf-2} 
\begin{align*}
a_k(Q) \le \sup_{Q' \subset \R^{n-1}:\ell(Q') \geq r_0} \bigg(\fint_{Q'} |f(x')-f_{Q'}|^2 dx' \bigg)^{\frac12}  < \eta. 
\end{align*}
If $\ell(Q)<r_0$, we recall that $Q \subset \R^{n-1} \setminus Q(0, r)$ and pick $r \ge 2^{k+1} r_0$ sufficiently large so that $2^k Q \subset \R^{n-1} \setminus Q(0, r/2) \subset \R^{n-1} \setminus Q(0, r_0)$. Thus, this and \eqref{eq:gaaf-3} imply  
\begin{align*}
a_k(Q) \le \sup_{Q' \subset \R^{n-1} \setminus Q(0, r_0)} \bigg(\fint_{Q'} |f(x')-f_{Q'}|^2 dx' \bigg)^{\frac12} < \eta. 
\end{align*}
Collecting these estimates, we have shown that given $\eta>0$, there exists $r_0=r_0(\eta)>0$ such that for any $r \ge 2^{k+1}r_0$, 
\begin{align*}
 \sup_{Q \subset \R^{n-1} \setminus Q(0,r)} a_k(Q) < \eta. 
 \end{align*}
 That is, for any fixed $k \in \N$, \eqref{eq:rr-23} holds. This completes the proof. 
\end{proof}

\section{The uniqueness of the solution}\label{sec:unique} 
In this section, we will prove the uniqueness of the solution, which will follow from Fatou-type theorems \ref{thm:Fatou-CMO} and \ref{thm:Fatou-XMO}. 

\begin{lemma}\label{lem:ff-Car}
Let $L$ be an $N \times N$ elliptic system with constant complex coefficients as in \eqref{eq:Lu}--\eqref{eq:elliptic} and let $P^L$ be the Poisson kernel for $L$ in $\R^n_+$ from Theorem {\rm \ref{thm:Poisson}}, together with $K^L$ as in \eqref{def:KL}. Set 
\[
\Phi(x') := \partial_n K^L(x', 1), \quad x' \in \R^{n-1}. 
\] 
Then for any $f \in L^1(\R^{n-1}, dx'/(1+|x'|^n))^N$ and for any cube $Q \subset \R^{n-1}$, one has 
\begin{align}\label{eq:ff-Car}
\bigg(\fint_Q |f - f_Q|^2 \, dx' \bigg)^{\frac12}
\lesssim \sum_{k=1}^{\infty} 2^{-k} 
\bigg(\frac{1}{|2^k Q|} \iint_{T_{2^k Q}} |\Phi_t*f(x')|^2 \, \frac{dx' dt}{t} \bigg)^{\frac12}, 
\end{align}
where $\Phi_t(x')=t^{1-n}\Phi(x'/t)$ for each $(x', t) \in \R^n_+$. 
\end{lemma}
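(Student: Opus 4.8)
The plan is to establish the reverse Carleson-type estimate \eqref{eq:ff-Car} via a Calderón reproducing formula adapted to the Poisson semigroup of $L$, combined with a telescoping argument to transfer from the cube $Q$ to its dilates $2^k Q$. First I would recall that, since $\Phi(x') = \partial_n K^L(x', 1)$ and $K^L$ is $(1-n)$-homogeneous and smooth away from the origin (Theorem \ref{thm:Poisson}), one has $\Phi_t(x') = t \,\partial_n K^L(x', t)$, so $\Phi_t * f(x') = t \,\partial_t(P_t^L * f)(x')$ and $\Phi$ satisfies the decay $|\Phi(x')| \lesssim (1+|x'|)^{-n}$ together with the cancellation $\int_{\R^{n-1}} \Phi(x')\,dx' = \partial_t\big(\int_{\R^{n-1}} K^L(x', t)\,dx'\big)\big|_{t=1} = 0$ by \eqref{eq:Identity-2}. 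The cancellation is the crucial structural input: it means that for a suitable (scalar, radial, smooth, compactly supported) auxiliary function $\psi$ with enough vanishing moments one can find $\widetilde{\Phi}$ with $\int_0^\infty \widehat{\Phi}(t\xi)^{*}\,\widehat{\psi}(t\xi)\,\frac{dt}{t} = I_{N\times N}$ for $\xi \neq 0$; equivalently, there is a Calderón-type identity
\[
f = \int_0^\infty \widetilde{\Phi}_t * (\Phi_t * f) \,\frac{dt}{t}
\]
valid in an appropriate sense for $f \in \BMO$ modulo constants. (Alternatively one avoids inventing $\widetilde\Phi$ and instead quotes the quadratic-estimate machinery of \cite{MMMM19} relating $\|f - f_Q\|_{L^2(Q)}$ to the local square function of $u = P_t^L * f$; the mechanism is the same.)

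Next I would localize. Write $f - f_Q = (f - f_Q)\mathbf{1}_{4Q} + (f-f_Q)\mathbf{1}_{\R^{n-1}\setminus 4Q}$ and apply the reproducing formula on $Q$. For the local part one uses the $L^2 \to L^2(\R^n_+, dx'dt/t)$ boundedness of the square function (the adjoint side of Lemma \ref{lem:SIO}\eqref{list-1}) to bound $\fint_Q |f-f_Q|^2$ by $\frac{1}{|Q|}\iint_{T_{CQ}} |\Phi_t * f|^2 \frac{dx'dt}{t}$ up to a telescoping error relating $f_{4Q}$ to $f_Q$; for the far part, the kernel bound $|\Phi_t(x'-y')| \lesssim t/|(x'-y',t)|^n$ together with the fact that $|x'-y'| \gtrsim \ell(Q)$ for $x' \in Q$, $y' \notin 4Q$ forces the contribution to be governed by $\iint$ over the same Carleson box. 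Then, to pass from a single box $T_{CQ}$ to the stated sum $\sum_k 2^{-k}(\cdots)_{2^k Q}$, I would run the telescoping inequality \eqref{eq:tele} in reverse direction: each intermediate average difference $|f_{2^j Q} - f_{2^{j+1}Q}|$ is itself controlled, via the reproducing formula restricted to the annular region $T_{2^{j+1}Q}\setminus T_{2^j Q}$, by $\big(\frac{1}{|2^{j+1}Q|}\iint_{T_{2^{j+1}Q}} |\Phi_t*f|^2 \frac{dx'dt}{t}\big)^{1/2}$, and the geometric weights $2^{-k}$ appear from the off-diagonal decay of $\widetilde\Phi_t$ (or from summing the telescoped terms as in \eqref{eq:fff-2}).

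The main obstacle I anticipate is making the Calderón reproducing formula rigorous for $\BMO$ data: $f$ need not decay, so $\int_0^\infty \widetilde\Phi_t * (\Phi_t * f)\frac{dt}{t}$ converges only modulo constants and only after suitable truncation and a limiting argument, and one must check that the constants are exactly the $f_Q$'s that appear on the left. A clean way around this is to work from the outset with $g := f - f_{2^N Q}$ on a large box and let $N \to \infty$, or to invoke the already-established quadratic estimates of \cite{MMMM19} as a black box and only do the telescoping bookkeeping here; I would take the latter route to keep the argument short, so that the real content of the lemma reduces to the two-sided localization estimate plus the reverse telescoping in \eqref{eq:tele}, both of which are routine once the reproducing identity is in hand.
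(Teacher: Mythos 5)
There is a genuine gap in the localization scheme, and it sits exactly at the step you flag as ``the real content reduces to the two-sided localization estimate plus the reverse telescoping.'' The intermediate claim $\fint_Q |f - f_Q|^2\,dx' \lesssim \frac{1}{|Q|}\iint_{T_{CQ}}|\Phi_t*f|^2\,\frac{dx'dt}{t}$ for a fixed $C$ is \emph{false}: in the reproducing identity $f = \int_0^\infty \widetilde\Phi_t*(\Phi_t*f)\,\frac{dt}{t}$ (mod constants), the values of $f$ on $Q$ receive contributions from $(\Phi_t*f)(y')$ with $t \gg \ell(Q)$ and $|y'-x_Q'| \gg \ell(Q)$ — contributions which cannot be captured by a single Carleson box over a bounded dilate of $Q$. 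That is precisely why the sum $\sum_k 2^{-k}(\cdots)_{2^kQ}$ must appear in \eqref{eq:ff-Car}; it is not a weakening of a cleaner single-box estimate that one could recover later by ``reverse telescoping.'' Moreover, the decomposition $f - f_Q = (f-f_Q)\mathbf 1_{4Q} + (f-f_Q)\mathbf 1_{(4Q)^c}$ decomposes the \emph{source} $f$, but what needs to be localized is the region of $\R^n_+$ on which you sample $\Phi_t*f$; those are different things, and cutting $f$ does not localize the $(x',t)$-integral in the reproducing formula.

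The fix — and the route the paper actually takes — is to dualize before localizing. One writes $\left(\fint_Q |f-f_Q|^2\,dx'\right)^{1/2}$ via duality against $h := (g-g_Q)\mathbf 1_Q$ with $\|g\|_{L^2(Q)}=1$, so that $h$ is compactly supported in $Q$, $L^2$-normalized, and has mean zero. Using the identity (quoted from \cite[eq.~(4-91)]{MMMM19}) $\int\langle f,h\rangle\,dx' = \lim_{\varepsilon\to 0^+}\int_\varepsilon^{1/\varepsilon}\int\langle\Phi_t*f,\widetilde\Phi_t*h\rangle\,dx'\frac{dt}{t}$, one then decomposes $\R^n_+$ into $T_{2Q}$ and the annuli $T_{2^{k+1}Q}\setminus T_{2^kQ}$. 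On $T_{2Q}$, the $L^2\to L^2(dx'dt/t)$ boundedness (Lemma~\ref{lem:SIO}\eqref{list-1}) applied to $h$ plus Cauchy--Schwarz gives the $k=1$ term. On the $k$-th annulus, the mean-zero property of $h$ and the gradient bound \eqref{eq:Poisson-smooth} give the pointwise decay $|\widetilde\Phi_t*h(x',t)| \lesssim t\,\ell(Q)\,(2^k\ell(Q))^{-(n+1)}\|h\|_{L^1}$, which after Cauchy--Schwarz produces the factor $2^{-k}$. The cancellation that drives the off-diagonal decay thus lives in the test function $h$, not in $\Phi$ alone, and the constants $f_Q$ never need to be matched to anything — they disappear automatically when you pair against a mean-zero $h$.
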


\begin{proof}
Let $f \in L^1(\R^{n-1}, dx'/(1+|x'|^n))^N$ and $Q \subset \R^{n-1}$ be a cube. By duality, we have 
\begin{align}\label{eq:duality}
\bigg(\fint_Q |f - f_Q|^2 \, dx' \bigg)^{\frac12}
&=\sup_{\|g\|_{L^2(Q)}=1} |Q|^{-\frac12} \bigg|\int_Q  (f-f_Q) g \, dx'\bigg|
\\ \nonumber
&=\sup_{\|g\|_{L^2(Q)}=1} |Q|^{-\frac12} \bigg|\int_Q  f \, (g-g_Q) \, dx'\bigg|. 
\end{align}
In what follows, we fix $g \in L^2(Q)$ with $\|g\|_{L^2(Q)}=1$, and write $h:=(g-g_Q) \mathbf{1}_Q$. Then, 
\begin{align}\label{eq:gh}
\supp h \subset Q, \quad \|h\|_{L^2(Q)} \le 2 \|g\|_{L^2(Q)}, 
\quad\text{and}\quad\int_{\R^{n-1}} h(x') \, dx' = 0. 
\end{align}

To continue, we introduce the matrix-valued functions 
\begin{align*}
\widetilde{\Phi}(x') := \Phi^{\top}(-x'), \quad 
\widetilde{\Psi}_{\varepsilon, \varepsilon^{-1}}(x') := \Psi^{\top}_{\varepsilon, \varepsilon^{-1}}(-x'), \quad
\widetilde{P}^L(x') := (P^L)^{\top}(-x'), \quad x' \in \R^{n-1}, 
\end{align*}
where for all $0<a<b<\infty$, 
\[
\Psi_{a, b}(x') := 4 \int_a^b \Phi_t*\Phi_t(x') \, \frac{dt}{t}, \quad x \in \R^{n-1}. 
\]
Here and elsewhere we use the notation
\[
\langle \lambda, \lambda' \rangle := \sum_{\alpha=1}^N \lambda_{\alpha} \lambda'_{\alpha}, 
\quad\text{for all } \, \lambda=(\lambda_{\alpha})_{1 \le \alpha \le N}, \,\,\, 
\lambda'=(\lambda'_{\alpha})_{1 \le \alpha \le N} \in \mathbb{C}^N.  
\]
As shown in \cite[eq. (4-91)]{MMMM19}, there holds 
\begin{align}\label{eq:limit}
\int_{\R^{n-1}} \langle f(x'), h(x') \rangle dx'
=\lim_{\varepsilon \to 0^+} \int_{\R^{n-1}} \langle f(x'), \widetilde{\Psi}_{\varepsilon, \varepsilon^{-1}}*h(x') \rangle dx'. 
\end{align}
By definition, for every $\varepsilon>0$, we may write 
\begin{align}\label{eq:FH}
\int_{\R^{n-1}} \langle f(x'), \widetilde{\Psi}_{\varepsilon, \varepsilon^{-1}}*h(x') \rangle \, dx' 
&= \int_{\R^{n-1}} \langle \Psi_{\varepsilon, \varepsilon^{-1}}*f(x'), h(x') \rangle \, dx' 
\\ \nonumber
&= \int_{\varepsilon}^{\varepsilon^{-1}} \int_{\R^{n-1}} \langle \Phi_t * \Phi_t*f(x'), h(x') \rangle \, dx' \frac{dt}{t}
\\ \nonumber
&= \int_{\varepsilon}^{\varepsilon^{-1}} \int_{\R^{n-1}} \langle \Phi_t *f(x'), \widetilde{\Phi}_t * h(x') \rangle \, dx' \frac{dt}{t} 
\\ \nonumber
&=: \int_{\varepsilon}^{\varepsilon^{-1}} \int_{\R^{n-1}} \langle F(x', t), H(x', t) \rangle \, dx' \frac{dt}{t}, 
\end{align}
where 
\begin{align*}
F(x', t) := \Phi_t*f(x') \quad\text{and}\quad H(x', t) := \widetilde{\Phi}_t*h(x'), \quad (x', t) \in \R^n_+. 
\end{align*}
Considering the last integral above, we control 
\begin{multline}\label{eq:FHJak}
\iint_{\R^n_+} |\langle F(x', t), H(x', t) \rangle| \frac{dx' dt}{t} 
\\
\le \bigg(\iint_{T_{2Q}} + \sum_{k=1}^{\infty} 
\iint_{T_{2^{k+1}Q} \setminus T_{2^k Q}} \bigg) |F(x', t)| |H(x', t)| \frac{dx' dt}{t} 
=: \mathcal{J}_0 + \sum_{k=1}^{\infty} \mathcal{J}_k. 
\end{multline}

In order to analyze $\mathcal{J}_0$, we observe that 
\begin{align}\label{eq:Phi-PL}
\Phi_t(x') = t^{1-n} (\partial_n K^L)(x'/t, 1)
= t \, (\partial_n K^L)(x', t) = t \partial_t K^L(x', t) = t \partial_t P_t^L(x'), 
\end{align}
and by \eqref{eq:grad-PL} with $t=1$, 
\begin{align*}
\int_{\R^{n-1}} \widetilde{\Phi}(x') \, dx'
&= \bigg(\int_{\R^{n-1}} \Phi(-x') \, dx' \bigg)^{\top} 
= \bigg(\int_{\R^{n-1}} \Phi(x') \, dx' \bigg)^{\top} 
\\
&= \bigg(\int_{\R^{n-1}} \partial_n K^L(x', 1) \, dx' \bigg)^{\top} 
=0, 
\end{align*}
Using these and \eqref{eq:Poisson-smooth}, and setting $\theta(x, t; y') := \widetilde{\Phi}_t(x'-y')$ for every $(x', t) \in \R^n_+$, we obtain 
\begin{align}
\label{eq:H-kernel-1} |\theta(x', t; y')| &= |\Phi_t(y'-x')| = t |\partial_t K^L(y'-x', t)| 
\lesssim \frac{t}{|(x'-y', t)|^n}, 
\\
\label{eq:H-kernel-2} |\nabla_{y'}\theta(x', t; y')| &=  t |(\nabla_{y'} \partial_t K^L)(y'-x', t)| 
\lesssim \frac{t}{|(x'-y', t)|^{n+1}}, 
\\
\label{eq:H-kernel-3} \int_{\R^{n-1}} \theta(x', t; y') \, dy'
&=\int_{\R^{n-1}} \widetilde{\Phi}_t(x'-y') \, dy'
=\int_{\R^{n-1}} \widetilde{\Phi}(y') \, dy' =0. 
\end{align}
This verifies the assumptions \eqref{eq:size}, \eqref{eq:cancel}, and \eqref{eq:smooth}. If we define the matrix-valued integral operator $\Theta$ as in \eqref{eq:Theta}, then \eqref{eq:L2-bound} yields that 
\begin{align}\label{eq:H-bound}
H=\Theta : L^2(\R^{n-1}, \mathbb{C}^N) \to L^2(\R^n_+, dx' dt/t)^N \text{ is bounded}. 
\end{align}
Accordingly, the Cauchy-Schwarz inequality, \eqref{eq:H-bound} and \eqref{eq:gh} imply 
\begin{align}\label{eq:Jak-1}
\mathcal{J}_0 
&\le \bigg(\iint_{T_{2Q}} |F(x', t)|^2 \frac{dx' dt}{t} \bigg)^{\frac12} 
\bigg(\iint_{T_{2Q}} |G(x', t)|^2 \frac{dx' dt}{t} \bigg)^{\frac12} 
\nonumber \\ 
&\le 
\bigg(\iint_{T_{2Q}} |F(x', t)|^2 \frac{dx' dt}{t} \bigg)^{\frac12} 
\bigg(\iint_{\R^n_+} |H(x', t)|^2 \frac{dx' dt}{t} \bigg)^{\frac12} 
\nonumber \\ 
&\lesssim \bigg(\iint_{T_{2Q}} |\Phi_t*f(x')|^2 \frac{dx' dt}{t} \bigg)^{\frac12}. 
\end{align}

To estimate $\mathcal{J}_k$ for $k \ge 1$, we claim that 
\begin{align}\label{eq:H-pointwise}
|H(x', t)| \lesssim \frac{t \, \ell(Q)}{(2^k \ell(Q))^{n+1}} \|h\|_{L^1(Q)}, 
\quad \forall (x', t) \in T_{2^{k+1}Q} \setminus T_{2^k Q}. 
\end{align}
Indeed, using the notation above, the last estimate in \eqref{eq:gh}, and \eqref{eq:H-kernel-2}, we arrive that 
\begin{align*}
|H(x', t)| &= \bigg|\int_{Q} (\theta(x', t; y') - \theta(x', t; x'_Q) ) h(y') \, dy'\bigg|
\\ 
&= \sup_{y' \in Q} |\nabla_{y'} \theta(x', t; y')| \, \|h\|_{L^1(Q)} 
\\
&\lesssim \sup_{y' \in Q} \frac{t}{|(x'-y', t)|^{n+1}} \, \|h\|_{L^1(Q)} 
\\
& \lesssim \frac{t \, \ell(Q)}{(2^k \ell(Q))^{n+1}} \|h\|_{L^1(Q)}, 
\end{align*}
whenever $(x', t) \in T_{2^{k+1}Q} \setminus T_{2^k Q}$ and $k \ge 1$.  Hence, the Cauchy-Schwarz inequality and \eqref{eq:H-pointwise} immediately give 
\begin{align}\label{eq:Jak-2}
\mathcal{J}_k 
&\le \bigg(\iint_{T_{2^{k+1}Q} \setminus T_{2^k Q}} |F(x', t)|^2  |H(x', t)|  \frac{dx' dt}{t} \bigg)^{\frac12} 
\bigg(\iint_{T_{2^{k+1}Q} \setminus T_{2^k Q}} |H(x', t)| \frac{dx' dt}{t} \bigg)^{\frac12} 
\nonumber \\ 
&\lesssim 2^{-k} \|h\|_{L^1(Q)} \bigg(\frac{1}{|2^{k+1} Q|}\iint_{T_{2^{k+1}Q}} |\Phi_t*f(x')|^2 \frac{dx' dt}{t} \bigg)^{\frac12} 
\nonumber \\ 
&\lesssim 2^{-k} |Q|^{\frac12} \bigg(\frac{1}{|2^{k+1} Q|}\iint_{T_{2^{k+1}Q}} |\Phi_t*f(x')|^2 \frac{dx' dt}{t} \bigg)^{\frac12}. 
\end{align}
As a consequence, the estimate \eqref{eq:ff-Car} follows at one from \eqref{eq:duality}--\eqref{eq:FHJak}, \eqref{eq:Jak-1}, and \eqref{eq:Jak-2}. 
\end{proof}

Next, we study the boundary behavior of the vertical shifts of a smooth null-solution of $L$ which satisfies a Carleson measure condition in the upper half-space. Moreover, we prove that each vertical shift of a such null-solution, denoted by $u_{\varepsilon}$, has a Poisson integral representation formula, but also the boundary trace of $u_{\varepsilon}$ belongs to $\CMO$ or $\XMO$ uniformly, provided $u \in \V_2(\R^n_+)$ or $u \in \V_3(\R^n_+)$, respectively. 

\begin{lemma}\label{lem:u-ue}
Let $L$ be an $N \times N$ elliptic system with constant complex coefficients as in \eqref{eq:Lu}--\eqref{eq:elliptic} and let $P^L$ be the Poisson kernel for $L$ in $\R^n_+$ from Theorem {\rm \ref{thm:Poisson}}. Suppose that $u \in \C^{\infty}(\R^n_+, \mathbb{C}^N)$ satisfies $Lu=0$ in $\R^n_+$ and $\|u\| < \infty$. For each $\varepsilon>0$, define 
\[
u_{\varepsilon}(x', t) := u(x', t+\varepsilon) \quad\text{and}\quad 
f_{\varepsilon}(x') := u(x', \varepsilon), \quad x' \in \R^n, \, t>0. 
\] 
Then there exists a constant $C \in (0, \infty)$ such that for every $\varepsilon>0$ the following hold: 
\begin{list}{\textup{(\theenumi)}}{\usecounter{enumi}\leftmargin=1cm \labelwidth=1cm \itemsep=0.2cm 
		\topsep=.2cm \renewcommand{\theenumi}{\alph{enumi}}}

\item\label{list:u-ue-1} One has $u_{\varepsilon} \in \C^{\infty}(\overline{\R^n_+}, \mathbb{C}^N)$, $Lu_{\varepsilon}=0$ in $\R^n_+$, and $\|u_{\varepsilon}\|_{\mathcal{C}(\R^n_+)} \le C \|u\|_{\mathcal{C}(\R^n_+)}$. 

\item\label{list:u-ue-2} The Poisson integral representation holds: 
\[
u_{\varepsilon}(x', t) = P_t^L*f_{\varepsilon}(x')
=\big(P_t^L*(\restr{u_{\varepsilon}}{\partial \R^n_+}) \big)(x'), \quad\forall (x', t) \in \R^n_+.
\]

\item\label{list:u-ue-3} The function $f_{\varepsilon}$ belongs to $\dot{\C}^{\eta}(\R^{n-1}, \mathbb{C}^N) \cap \C^{\infty}(\R^{n-1}, \mathbb{C}^N) \cap \BMO(\R^{n-1}, \mathbb{C}^N)$ for each $\eta \in (0, 1)$, and 
\[
\|f_{\varepsilon}\|_{\BMO(\R^{n-1}, \mathbb{C}^N)} \le C \|u\|_{\mathcal{C}(\R^n_+)}.
\] 

\item\label{list:u-ue-4} If $u \in \V_1(\R^n_+)$, then 
\[
\lim_{\varepsilon \to 0^+} \|f - f_{\varepsilon}\|_{\BMO(\R^{n-1}, \mathbb{C}^N)} =0. 
\]

\item\label{list:u-ue-5} For each $j=1,2,3$, $u \in \V_j(\R^n_+)$ implies $u_{\varepsilon} \in \V_j(\R^n_+)$. 

\item\label{list:u-ue-6} If $u \in \V_2(\R^n_+)$, then $f_{\varepsilon} \in \XMO(\R^{n-1}, \mathbb{C}^N)$. 

\item\label{list:u-ue-7} If $u \in \V_3(\R^n_+)$, then $f_{\varepsilon} \in \CMO(\R^{n-1}, \mathbb{C}^N)$. 
\end{list}
\end{lemma}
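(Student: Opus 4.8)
The plan is to prove the seven items essentially in the order listed, since each builds on the previous ones. For part \eqref{list:u-ue-1}, the key point is that $u_\varepsilon$ is a null-solution of $L$ on a neighborhood of $\overline{\R^n_+}$ (namely on $\R^{n-1}\times(-\varepsilon,\infty)$), so interior estimates for $L$ give $u_\varepsilon\in\C^\infty(\overline{\R^n_+},\CC^N)$; the Carleson-norm bound $\|u_\varepsilon\|_{\mathcal C(\R^n_+)}\le C\|u\|_{\mathcal C(\R^n_+)}$ follows because $\nabla u_\varepsilon(x',t)=\nabla u(x',t+\varepsilon)$, so a translation in $t$ plus the elementary observation that a Carleson box $T_Q$ shifted up by $\varepsilon$ is contained in a Carleson box of comparable size (split into the cases $\ell(Q)\le\varepsilon$ and $\ell(Q)\ge\varepsilon$, using interior estimates $|\nabla u(x)|\lesssim \fint_{B}|\nabla u|$ together with $\|u\|_{\mathcal C}<\infty$ to handle the near-boundary strip when $\ell(Q)\le\varepsilon$). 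For part \eqref{list:u-ue-3}, since $u_\varepsilon$ is smooth up to the boundary with $Lu_\varepsilon=0$ and $\|u_\varepsilon\|_{\mathcal C(\R^n_+)}<\infty$, the BMO-trace bound $\|f_\varepsilon\|_{\BMO}\le C\|u\|_{\mathcal C(\R^n_+)}$ is exactly the lower Carleson estimate already available (it is the content of Lemma \ref{lem:ff-Car} applied with $f=f_\varepsilon$, combined with part \eqref{list:u-ue-2} to identify $\Phi_t*f_\varepsilon$ with $t\partial_t u_\varepsilon$); the Hölder and $\C^\infty$ regularity of $f_\varepsilon=u(\cdot,\varepsilon)$ is immediate from smoothness up to the boundary and interior estimates.

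The crux is part \eqref{list:u-ue-2}, the Poisson representation $u_\varepsilon(x',t)=P^L_t*f_\varepsilon(x')$. Here I would argue as follows: $f_\varepsilon=u(\cdot,\varepsilon)\in\BMO(\R^{n-1},\CC^N)$ (from a weak form of \eqref{list:u-ue-3}, provable directly), hence $f_\varepsilon\in L^1(\R^{n-1},dx'/(1+|x'|^n))^N$, so $w(x',t):=P^L_t*f_\varepsilon(x')$ is a well-defined null-solution of $L$ in $\R^n_+$ with $w|^{\rm n.t.}_{\partial\R^n_+}=f_\varepsilon$ by Theorem \ref{thm:Poisson}. Both $u_\varepsilon$ and $w$ are null-solutions of $L$ in $\R^n_+$, both have the same nontangential boundary trace $f_\varepsilon$ a.e. (for $u_\varepsilon$ this uses continuity up to the boundary from \eqref{list:u-ue-1}), and both satisfy the Carleson-measure condition $\|\cdot\|_{\mathcal C(\R^n_+)}<\infty$. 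The difference $v:=u_\varepsilon-w$ is then a null-solution with vanishing nontangential trace and finite Carleson norm; invoking the uniqueness already established in the $\BMO$-Dirichlet problem of \cite{MMMM19} (or, equivalently, the Fatou-type representation there, which says any such null-solution equals the Poisson extension of its trace) forces $v\equiv0$. This is the step I expect to be the main obstacle, because it requires citing the $\BMO$ well-posedness machinery of \cite{MMMM19} precisely and checking that $u_\varepsilon$ satisfies its hypotheses (finite Carleson norm and existence a.e. of the nontangential trace, both of which we have).

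With \eqref{list:u-ue-2} in hand the remaining parts are routine. Part \eqref{list:u-ue-4}: write $f-f_\varepsilon$ and estimate its $\BMO$ norm by a Carleson-type quantity over the strip $\R^{n-1}\times(0,\varepsilon)$ — more precisely, using Lemma \ref{lem:ff-Car} applied to $f-f_\varepsilon$ together with $\Phi_t*(f-f_\varepsilon)(x')=t\partial_t u(x',t)-t\partial_t u(x',t+\varepsilon)$ and splitting the resulting integrals — so that $\beta_1(u)=0$ forces the near-diagonal contribution to $0$ as $\varepsilon\to0^+$; this is modeled on \cite[eq. (4-91) and following]{MMMM19}. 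Part \eqref{list:u-ue-5}: for $j=1,2,3$, the quantities $\beta_1,\beta_2,\beta_3,\beta'_3$ evaluated on $u_\varepsilon$ differ from those on $u$ only by the $t\mapsto t+\varepsilon$ shift, and each of the defining limits ($r\to0^+$, $r\to\infty$, $|x_Q|\to\infty$) is insensitive to this bounded shift once one again uses the box-containment trick from \eqref{list:u-ue-1}; hence $u\in\V_j$ gives $u_\varepsilon\in\V_j$. Finally, parts \eqref{list:u-ue-6} and \eqref{list:u-ue-7}: by \eqref{list:u-ue-2}, $f_\varepsilon$ is the nontangential trace of $u_\varepsilon$ with $u_\varepsilon=P^L_\bullet*f_\varepsilon$; combining \eqref{list:u-ue-5} (which gives $u_\varepsilon\in\V_2$ resp. $\V_3$) with Proposition \ref{pro:existence}'s converse direction — or rather with Lemma \ref{lem:ff-Car} and the characterizations in Propositions \ref{pro:XMO} and \ref{pro:CMO} — one reads off $\gamma_1(f_\varepsilon)=\gamma'_3(f_\varepsilon;Q)=0$ (resp. $\gamma_1=\gamma_2=\gamma_3=0$), so $f_\varepsilon\in\XMO$ (resp. $\CMO$). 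The one subtlety to flag is making sure the implication "$u_\varepsilon\in\V_j\Rightarrow f_\varepsilon$ in the right vanishing-oscillation space" uses Lemma \ref{lem:ff-Car} in the direction controlling oscillations of $f_\varepsilon$ by Carleson quantities of $u_\varepsilon$, which is exactly its stated form.
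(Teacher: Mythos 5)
Your proposal follows the paper's route in outline. For parts \eqref{list:u-ue-1}--\eqref{list:u-ue-4} the paper simply cites \cite{MMMM19} without reproducing the arguments; your sketches of (a), (b), (c) are reasonable reconstructions of what that reference does (your uniqueness-via-$\BMO$-Dirichlet argument for (b) is a valid way to land the Poisson representation). For parts \eqref{list:u-ue-6} and \eqref{list:u-ue-7} you correctly identify the chain the paper uses: part (b) to convert $\Phi_t*f_\varepsilon$ into $t\,\partial_t u_\varepsilon$, Lemma \ref{lem:ff-Car} to control oscillations of $f_\varepsilon$ by Carleson averages of $u_\varepsilon$, part (e) to place $u_\varepsilon$ in $\V_2$ resp.\ $\V_3$, and Propositions \ref{pro:XMO}/\ref{pro:CMO} to conclude. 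That all matches.

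The place where your sketch is genuinely under-specified is part \eqref{list:u-ue-5}. You describe the preservation of $\V_j$ under the vertical shift as the defining limits being ``insensitive to the bounded shift'' $t\mapsto t+\varepsilon$ via a box-containment trick. Box-containment (the shifted tent $Q\times(\varepsilon,\ell(Q)+\varepsilon)\subset T_{2Q}$) only applies when $\ell(Q)\ge\varepsilon$; for small cubes the shifted tent is far from the boundary and is \emph{not} contained in any tent of comparable side-length, so the argument there must instead go through the pointwise bound $t|\nabla u(x',t)|\lesssim\big(\tfrac{1}{|Q(x',2t)|}\iint_{T_{Q(x',2t)}}|\nabla u|^2 s\,dy'\,ds\big)^{1/2}$ from \cite[Lemma 4.3]{MMMM19}, which the paper invokes explicitly and which your parenthetical about interior estimates only gestures at. More importantly, the preservation of $\beta_3$ is not a one-condition statement: the paper establishes $\beta_1(u)=\beta_2(u)=\beta_3(u)=0\Rightarrow\beta_3(u_\varepsilon)=0$ via a three-way case split on $\ell(Q)$ relative to $\varepsilon$ and $r/\sqrt n$, using the near-boundary pointwise estimate in the first case, $\beta_2(u)=0$ in the second, and $\beta_3(u)=0$ in the third. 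Since $u\in\V_3$ supplies all three vanishing conditions, the \emph{conclusion} you state is correct, but treating each $\beta_j$ as individually preserved by the shift misrepresents what actually has to be checked and would not survive an attempt to write the details down. I would spell out the two regimes $\ell(Q)\ge\varepsilon$ and $\ell(Q)<\varepsilon$ separately, and for $\beta_3(u_\varepsilon)$ carry out the three-case analysis rather than appealing to shift-insensitivity.
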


\begin{proof}
Parts \eqref{list:u-ue-1}--\eqref{list:u-ue-4} are contained in \cite{MMMM19}. To prove part \eqref{list:u-ue-5}, we assume that $|\nabla u(x)|^2 \, dx' dt$ is a Carleson measure in $\R^n_+$, that is, $\|u\|_{\mathcal{C}(\R^n_+)} < \infty$. Fix a cube $Q \subset \R^{n-1}$ and $\varepsilon>0$. Let $r>0$ be an arbitrary number. If $\ell(Q) \ge \varepsilon$, a change of variables yields 
\begin{align}\label{eq:great}
\frac{1}{|Q|} \iint_{T_Q} |\nabla u_{\varepsilon}(x', t)|^2 \, t\, dx' dt
&\le \frac{1}{|Q|} \int_{\varepsilon}^{\ell(Q)+\varepsilon} \int_{Q} |\nabla u(x', t)|^2 \, t\, dx' dt
\nonumber \\ 
&\lesssim \frac{1}{|2Q|} \iint_{T_{2Q}} |\nabla u(x', t)|^2 \, t\, dx' dt, 
\end{align}
which implies that 
\begin{align}\label{eq:bue-2}
\beta_2(u) = 0 \quad \Longrightarrow \quad \beta_2(u_{\varepsilon}) = 0. 
\end{align}
On the other hand, checking the proof of \cite[Lemma 4.3]{MMMM19}, one gets 
\begin{align*}
t |\nabla u(x', t)| \lesssim \bigg(\frac{1}{|Q(x', 2t)|} \iint_{T_{Q(x', 2t)}} |\nabla u(y', s)|^2 \, s\, dy' ds \bigg)^{\frac12}, 
\end{align*}
for any $(x', t) \in \R^n_+$, where $Q(x', 2t)$ denotes the cube in $\R^{n-1}$ centered at $x'$ with side-length $2t$. This gives 
\begin{multline*}
\frac{1}{|Q|} \iint_{T_Q} |\nabla u_{\varepsilon}(x', t)|^2 \, t\, dx' dt
=\frac{1}{|Q|} \iint_{T_Q} |\nabla u(x', t+\varepsilon)|^2 \, t\, dx' dt
\\
\lesssim \frac{1}{|Q|} \iint_{T_Q} \frac{t}{(t+\varepsilon)^2} 
\bigg(\frac{1}{|Q(x', 2t)|}  \iint_{T_{Q(x', 2t)}} |\nabla u(y', s)|^2 \, s\, dy' ds \bigg)^{\frac12} dx' dt, 
\end{multline*}
and furthermore, whenever $\ell(Q) \le \min\{r, \varepsilon\}$,
\begin{multline}\label{eq:bue-11}
\frac{1}{|Q|} \iint_{T_Q} |\nabla u_{\varepsilon}(x', t)|^2 \, t\, dx' dt
\lesssim \int_0^{\ell(Q)}\frac{t}{(t+\varepsilon)^2} dt \sup_{\substack{Q' \subset \R^{n-1} \\ \ell(Q') \le 2r}}
\bigg(\frac{1}{|Q'|}  \iint_{T_{Q'}} |\nabla u(y', s)|^2 \, s\, dy' ds \bigg)^{\frac12} 
\\
\lesssim \sup_{Q' \subset \R^{n-1}: \ell(Q') \le 2r} 
\bigg(\frac{1}{|Q'|}  \iint_{T_{Q'}} |\nabla u(y', s)|^2 \, s\, dy' ds \bigg)^{\frac12},  
\end{multline}
which immediately implies 
\begin{align}
\label{eq:bue-11} \beta_1(u) = 0 & \quad \Longrightarrow \quad \beta_1(u_{\varepsilon}) = 0, 
\\ 
\label{eq:bue-12} \beta_1(u) = \beta'_3(u; Q) = 0 & \quad \Longrightarrow \quad \beta'_3(u_{\varepsilon}; Q) = 0. 
\end{align}

To proceed, let $Q \subset \R^{n-1} \setminus Q(0, r)$ and $r> \sqrt{n} \, \varepsilon$. If $\ell(Q) \le \varepsilon$, then trivially $\ell(Q) \le r$ and by \eqref{eq:bue-11},  
\begin{align}\label{eq:bue-31}
\frac{1}{|Q|} \iint_{T_Q} |\nabla u_{\varepsilon}(x', t)|^2 \, t\, dx' dt
&\lesssim \sup_{Q' \subset \R^{n-1}: \ell(Q') \le 2r} 
\bigg(\frac{1}{|Q'|}  \iint_{T_{Q'}} |\nabla u(y', s)|^2 \, s\, dy' ds \bigg)^{\frac12},  
\end{align}
If $\ell(Q) \ge r/\sqrt{n}$, then necessarily $\ell(Q) \ge \varepsilon$, and by \eqref{eq:great}, 
\begin{align}\label{eq:bue-32}
\frac{1}{|Q|} \iint_{T_Q} |\nabla u_{\varepsilon}(x', t)|^2 \, t\, dx' dt
&\lesssim \sup_{Q' \subset \R^{n-1}: \ell(Q') \ge 2r/\sqrt{n}} 
\bigg(\frac{1}{|Q'|} \iint_{T_{Q'}} |\nabla u(x', t)|^2 \, t \, dx' dt \bigg)^{\frac12}, 
\end{align}
If $\varepsilon < \ell(Q) < r/\sqrt{n}$, then $2Q \subset \R^{n-1} \setminus Q(0, r/2)$ and by \eqref{eq:great} again, 
\begin{align}\label{eq:bue-33}
\frac{1}{|Q|} \iint_{T_Q} |\nabla u_{\varepsilon}(x', t)|^2 \, t\, dx' dt
&\lesssim \sup_{Q' \subset \R^{n-1} \setminus Q(0, r/2)} 
\bigg(\frac{1}{|Q'|} \iint_{T_{Q'}} |\nabla u(x', t)|^2 \, t \, dx' dt \bigg)^{\frac12}. 
\end{align}
With \eqref{eq:bue-31}--\eqref{eq:bue-33} in hand, we obtain 
\begin{align}\label{eq:bue-3}
\beta_1(u) = \beta_2(u) = \beta_3(u) = 0 \quad \Longrightarrow \quad \beta_3(u_{\varepsilon}) = 0. 
\end{align}
Therefore, having established \eqref{eq:bue-2}, \eqref{eq:bue-11}, \eqref{eq:bue-12}, and \eqref{eq:bue-3}, we conclude that $u \in \V_j(\R^n_+)$ implies $u_{\varepsilon} \in \V_j(\R^n_+)$ for each $j=1,2,3$. 

Finally, let us turn to showing parts \eqref{list:u-ue-6} and \eqref{list:u-ue-7}. Suppose that $u \in \V_j(\R^n_+)$, $j=2,3$. By parts \eqref{list:u-ue-3} and \eqref{list:u-ue-5}, there holds 
\begin{align}\label{eq:ueVj}
f_{\varepsilon} \in L^1(\R^{n-1}, dx'/(1+|x'|^n))^N \quad\text{and}\quad u_{\varepsilon} \in \V_j(\R^n_+). 
\end{align}
In light of \eqref{eq:ff-Car}, \eqref{eq:Phi-PL}, and part \eqref{list:u-ue-2}, this in turn implies  
\begin{align}\label{eq:fe-bk}
\bigg(\fint_Q |f_{\varepsilon} - (f_{\varepsilon})_Q|^2 \, dx' \bigg)^{\frac12}
&\lesssim \sum_{k=1}^{\infty} 2^{-k} 
\bigg(\frac{1}{|2^k Q|} \iint_{T_{2^k Q}} |\Phi_t*f_{\varepsilon}(x')|^2 \, \frac{dx' dt}{t} \bigg)^{\frac12} 
\\ \nonumber
&= \sum_{k=1}^{\infty} 2^{-k} 
\bigg(\frac{1}{|2^k Q|} \iint_{T_{2^k Q}} |\partial_t (P_t^L*f_{\varepsilon})(x')|^2 \, t\, dx' dt \bigg)^{\frac12}
\\ \nonumber
&\le \sum_{k=1}^{\infty} 2^{-k} 
\bigg(\frac{1}{|2^k Q|} \iint_{T_{2^k Q}} |\nabla u_{\varepsilon}(x', t)|^2 \, t\, dx' dt \bigg)^{\frac12}
\\ \nonumber
&=: \sum_{k=1}^{\infty} 2^{-k} b_k(Q). 
\end{align}
Observe that 
\begin{align}\label{eq:supTQ}
\sup_{k \in \N} \sup_{Q \subset \R^{n-1}} b_k(Q) 
\lesssim \|u_{\varepsilon}\|_{\mathcal{C}(\R^n_+)}. 
\end{align}
Recall Propositions \ref{pro:XMO} and \ref{pro:CMO}, and the definition of $\V_2(\R^n_+)$ and $\V_3(\R^n_+)$. In order to obtain \eqref{list:u-ue-6} and \eqref{list:u-ue-7}, by \eqref{eq:ueVj}--\eqref{eq:supTQ},  it suffices to show that for any fix $k \in \N_0$,  
\begin{align}
\label{eq:bb-1} \beta_1(u_{\varepsilon}) =0 
&\quad\Longrightarrow\quad \lim_{r \to 0} \sup_{Q \subset \R^{n-1}:\ell(Q) \leq r} b_k(Q) = 0, 
\\
\label{eq:bb-2} \beta_2(u_{\varepsilon}) =0 
&\quad\Longrightarrow\quad \lim_{r \to \infty} \sup_{Q \subset \R^{n-1}:\ell(Q) \ge r} b_k(Q) =0, 
\\
\label{eq:bb-3} \beta'_3(u_{\varepsilon}; Q) =0 
&\quad\Longrightarrow\quad \lim_{|x_Q| \to \infty} b_k(Q) =0, \quad\text{for each cube } Q \subset \R^{n-1},  
\\
\label{eq:bb-23} \beta_2(u_{\varepsilon}) = \beta_3(u_{\varepsilon}) =0 &\quad\Longrightarrow\quad \lim_{r \to \infty} \sup_{Q \subset \R^{n-1} \setminus Q(0,r)} b_k(Q) =0. 
\end{align}
Indeed, \eqref{eq:bb-1}--\eqref{eq:bb-3} hold trivially. To show \eqref{eq:bb-23}, we fix $r>0$ and $Q \subset \R^{n-1} \setminus Q(0, r)$. The fact $\beta_2(u_{\varepsilon})=\beta_3(u_{\varepsilon})=0$ implies that for any given $\eta>0$, there exists $r_0=r_0(\eta)>0$ such that  
\begin{align}
\label{eq:uubb-1} & \sup_{Q' \subset \R^{n-1}:\ell(Q') \geq r_0} 
\bigg(\frac{1}{|Q'|} \iint_{T_{Q'}} |\nabla u_{\varepsilon}(x', t)|^2 \, t\, dx' dt \bigg)^{\frac12} < \eta,
\\
\label{eq:uubb-2} & \sup_{Q' \subset \R^{n-1} \setminus Q(0, r_0)} 
\bigg(\frac{1}{|Q'|} \iint_{T_{Q'}} |\nabla u_{\varepsilon}(x', t)|^2 \, t\, dx' dt \bigg)^{\frac12} < \eta. 
\end{align}
If $\ell(Q) \ge r_0$, then $\ell(2^k Q) \ge r_0$ and by \eqref{eq:uubb-1} 
\begin{align}\label{eq:bQTQ-1}
b_k(Q) \le \sup_{Q' \subset \R^{n-1}:\ell(Q') \geq r_0} 
\bigg(\frac{1}{|Q'|} \iint_{T_{Q'}} |\nabla u_{\varepsilon}(x', t)|^2 \, t\, dx' dt \bigg)^{\frac12}  < \eta. 
\end{align}
If $\ell(Q)<r_0$, we recall that $Q \subset \R^{n-1} \setminus Q(0, r)$ and pick $r \ge 2^{k+1} r_0$ sufficiently large so that $2^k Q \subset \R^{n-1} \setminus Q(0, r/2) \subset \R^{n-1} \setminus Q(0, r_0)$. Thus, this and \eqref{eq:gaaf-3} imply  
\begin{align}\label{eq:bQTQ-2}
b_k(Q) \le \sup_{Q' \subset \R^{n-1} \setminus Q(0, r_0)} 
\bigg(\frac{1}{|Q'|} \iint_{T_{Q'}} |\nabla u_{\varepsilon}(x', t)|^2 \, t\, dx' dt \bigg)^{\frac12} < \eta. 
\end{align}
Gathering \eqref{eq:bQTQ-1} and \eqref{eq:bQTQ-2}, we conclude that given $\eta>0$, there exists $r_0=r_0(\eta)>0$ such that for any $r \ge 2^{k+1}r_0$, 
\begin{align*}
 \sup_{Q \subset \R^{n-1} \setminus Q(0,r)} b_k(Q) < \eta. 
 \end{align*}
 Therefore, for any fixed $k \in \N$, \eqref{eq:bb-23} holds. This completes the proof. 
\end{proof}

\begin{proof}[\bf Proof of Theorems \ref{thm:Fatou-CMO} and \ref{thm:Fatou-XMO}] 
Let us first show Theorem \ref{thm:Fatou-CMO}. Assume that $u \in {\rm LMO}(\R^n_+) \cap \V_3(\R^n_+)$. By definition, it immediately implies that 
\begin{align}\label{eq:van-Car}
\text{$|\nabla u(x', t)|^2 t \, dx' dt$ is a Carleson measure in $\R^n_+$},
\end{align} 
that is, $\|u\|_{\mathcal{C}(\R^n_+)}<\infty$, which together with \cite[Theorem 1.2]{MMMM19} yields that  
\begin{equation*}
f:=\restr{u}{\partial \R^n_+}^{\rm n.t.} \text{ exists a.e. in } \R^{n-1}, \text{ and lies in } \BMO(\R^{n-1}, \mathbb{C}^N). 
\end{equation*}
Thus, to get \eqref{eq:u-CMO}, by definition and Proposition \ref{pro:CMO}, it suffices to show 
\begin{align}
\label{eq:gaf-1} \gamma_1(f) &:= \lim_{r \to 0^+} \sup_{Q \subset \R^{n-1}:\ell(Q) \le r} \fint_Q |f(x')-f_Q| dx' =0,
\\ 
\label{eq:gaf-2} \gamma_2(f) &:= \lim_{r \to \infty} \sup_{Q \subset \R^{n-1}:\ell(Q) \ge r} \fint_Q |f(x')-f_Q| dx' =0,
\\
\label{eq:gaf-3} \gamma_3(f) &:= \lim_{r \to \infty} \sup_{Q \subset \R^{n-1} \setminus Q(0,r)} \fint_Q |f(x')-f_Q| dx' =0. 
\end{align}

To proceed, for each $\varepsilon>0$, define 
\[
u_{\varepsilon}(x', t) := u(x', t+\varepsilon) \quad\text{and}\quad 
f_{\varepsilon}(x') := u(x', \varepsilon), \quad x' \in \R^n, \, t>0. 
\] 
Invoking Lemma \ref{lem:u-ue} and \eqref{eq:van-Car}, we get 
\begin{align}\label{eq:fe-CMO}
\lim_{\varepsilon \to 0^+} \|f_{\varepsilon} - f\|_{\BMO(\R^{n-1}, \mathbb{C}^N)} =0 \quad\text{and}\quad 
f_{\varepsilon} \in \CMO(\R^{n-1}, \mathbb{C}^N), \quad\forall \varepsilon>0.  
\end{align}
Note that for every cube $Q \in \R^{n-1}$, 
\begin{align*}
\fint_Q |f - f_Q| \, dx'
&\le \fint_Q |(f-f_{\varepsilon}) - (f-f_{\varepsilon})_Q| \, dx'
+ \fint_Q |f_{\varepsilon} - (f_{\varepsilon})_Q| \, dx'
\\
&\le \|f_{\varepsilon} - f\|_{\BMO(\R^{n-1}, \mathbb{C}^N)} 
+ \fint_Q |f_{\varepsilon} - (f_{\varepsilon})_Q| \, dx', 
\end{align*}
which combining with the second estimate in \eqref{eq:fe-CMO} gives 
\begin{align*}
\gamma_j(f) 
&\le \|f_{\varepsilon} - f\|_{\BMO(\R^{n-1}, \mathbb{C}^N)}, \quad \forall \varepsilon>0, \, \, j=1,2,3. 
\end{align*}
Letting $\varepsilon \to 0^+$ above and using the first estimate in \eqref{eq:fe-CMO}, we conclude \eqref{eq:gaf-1}--\eqref{eq:gaf-3}. 

Let us turn to the proof of \eqref{eq:CMO-V3}. Indeed, the left-to-right inclusion is contained in Proposition \ref{pro:existence}, while the right-to-left inclusion is a consequence of \eqref{eq:u-CMO}. 

Finally, the proof of Theorems \ref{thm:Fatou-XMO} is the same as above, since the corresponding estimates have been established in Proposition \ref{pro:existence} and Lemma \ref{lem:u-ue}. 
\end{proof}

Finally, let us see how to conclude our main theorems \ref{thm:CMO} and \ref{thm:XMO} from Proposition \ref{pro:existence}, Theorem \ref{thm:Fatou-CMO}, and Theorem \ref{thm:Fatou-XMO}. 

\medskip 
\begin{proof}[\bf Proof of Theorems \ref{thm:CMO} and \ref{thm:XMO}]
From Proposition \ref{pro:existence}, we see that the function $u$ defined in \eqref{eq:CMO-solution} solves the $\CMO$-Dirichlet boundary value problem \eqref{eq:CMO}. The uniqueness is a consequence of Theorem \ref{thm:Fatou-CMO}. The proof of Theorem \ref{thm:XMO} follows the same scheme. 
\end{proof}

\end{document}